\DeclareMathOperator*{\argmax}{arg\,max} 
\crefname{hypothesis}{Hypothesis}{Hypotheses}
\title{Maximum bound principle and original energy dissipation of arbitrarily high-order rescaled ETD Runge--Kutta schemes for Allen--Cahn equations 
}
\author{Chaoyu Quan\thanks{School of Science and Engineering,  The Chinese University of Hong Kong,  Shenzhen,  518172,Guangdong, People's Republic of China
    (\email{quanchaoyu@cuhk.edu.cn}).}
        \and Xiaoming Wang\thanks{Department of Mathematics and Statistics, Missouri University of Science and
Technology, Rolla, MO 65409, United States of America
(\email{xiaomingwang@mst.edu}).}
  \and Pinzhong Zheng\thanks{Department of Mathematics, Southern University of Science and Technology, Shenzhen, 518055, Guangdong, People's Republic of China (\email{zhengpinzhong@outlook.com}).}
  \and Zhi Zhou\thanks{Department of Applied Mathematics, The Hong Kong Polytechnic University, Kowloon, Hong Kong SAR, People's Republic of China
    (\email{zhizhou@polyu.edu.hk}).}}
\begin{document}

\maketitle

\begin{abstract}
The energy dissipation law and the maximum bound principle  are two critical physical properties of the Allen--Cahn equations. While many existing time-stepping methods are known to preserve the energy dissipation law, most apply to a modified form of energy. In this work, we demonstrate that, when the nonlinear term of the Allen--Cahn equation is Lipschitz continuous, a class of arbitrarily high-order exponential time differencing Runge--Kutta (ETDRK) schemes preserve the original energy dissipation property, under a mild step-size constraint. Additionally, we guarantee the Lipschitz condition on the nonlinear term by applying a rescaling post-processing technique, which ensures that the numerical solution unconditionally satisfies the maximum bound principle. Consequently, our proposed schemes maintain both the original energy dissipation law and the maximum bound principle and can achieve arbitrarily high-order accuracy. We also establish an optimal error estimate for the proposed schemes. Some numerical experiments are carried out to verify our theoretical results.
\end{abstract}

\begin{keywords}
  Exponential time differencing Runge--Kutta method; energy dissipation law, maximum bound principle, Allen--Cahn equation.
\end{keywords}

\begin{MSCcodes}
65M06, 65M12, 65M15
\end{MSCcodes}

\section{Introduction} \label{section:intro} 
Phase field equations play an important role in modeling a wide array of free-boundary problems across diverse fields such as materials science, physics, and biology \cite{cahn1958free,allen1979microscopic,hawkins2012numerical}. In this work, we consider a popular phase-field model, namely the Allen--Cahn equation:
\begin{equation}\label{eq:AC}
    \left\{
    \begin{aligned}
       & u_{t}      = \varepsilon^2 \Delta u+f(u), &  & x \in \Omega, ~  t \in(0,T], \\
       & u(x,0)  =u^0({x}),                     &  & x \in \bar{\Omega},
    \end{aligned}
    \right.
\end{equation}
equipped with homogeneous Neumann boundary condition.
Here, $\Omega$ denotes an open, connected, and bounded domain within $\mathbb{R}^d (d=1,2,3)$, and $\Delta$ representing the Laplacian operator over $d$ dimensions. The unknown function $u$ denotes the phase variable, and the parameter $\varepsilon >0$ represents the inter-facial width. The nonlinear term $f(u)=-F^{\prime}(u)$, where $F$ is a double-well potential with two wells at $\pm\beta$ for some $\beta>0$.
 
A notable feature of the Allen--Cahn equation is the \textit{maximum bound principle} (MBP), i.e., if the initial values are within $\beta$ in absolute value, the solution remains bounded by $\beta$ at all times. Furthermore, this model satisfies the so-called \textit{energy dissipation law}, because \cref{eq:AC} can be viewed as an $L^2$ gradient flow with respect to the energy functional:
\begin{equation}\label{eq:E}
    E(u)=\int_{\Omega}\left(\frac{\varepsilon^2}{2}|\nabla u|^2+F(u)\right) \mathrm{~d} x.
\end{equation}
The energy dissipation law is more precisely formulated as:
\begin{equation}\label{eq:dE/dt}
    \frac{\mathrm{d}}{\mathrm{d} t} E(u)=\left(\frac{\delta E(u)}{\delta u}, \frac{\partial u}{\partial t}\right)=-\left\|{\partial_{t} u}\right\|^2 \leq 0, \quad \forall t>0,
\end{equation}
where $(\cdot, \cdot)$ and $\|\cdot\|$ represent the standard $L^2$ inner product and norm.
Given the absence of exact solutions for many phase field models including the Allen--Cahn equations, the acquisition of precise and stable numerical simulations that faithfully replicate their physical characteristics becomes essential. To reduce the risk of encountering nonphysical results, it is important to design accurate numerical methods that preserve the energy dissipation law and the MBP.

In recent years, there has been a significant focus on 
the development and analysis of time stepping schemes that preserve the MBP of Allen--Cahn equations as well as the energy dissipation law of general gradient flow models.
These efforts have explored a wide range of methods, including convex splitting methods \cite{du91convex,Eyre1998convex,wise2009convexspliting,guan2014convexsplitting}, operator splitting methods \cite{cheng2015fast,li2017convergence,LI-QUAN-XU2022splitting1,LI-QUAN-XU2022splitting2}, stabilized implicit-explicit (IMEX) schemes \cite{XU-TANG2006IMEX,TANG-YANG2016IMEX,SHEN-TANG-YANG2016IMEX_GeneralAC,FU-TANG-YANG2022IMEX_Energy,liao2023BDF_IMEX}, 
integrating factor Runge--Kutta (IFRK) methods \cite{JU-LI-QIAO-YANG2021IFRK,LI-LI-JU-FENG2021sIFRK}, exponential time differencing (ETD) schemes \cite{COX-MATTHEWS2002ETD,DU-JU-LI-QIAO2019ETD,DU-JU-LI-QIAO2021ETD,FU-Yang2022ETDRK2Energy,FU-SHEN-YANG2024HigherOrder_ED}, invariant energy quadratization (IEQ) schemes \cite{XiaofengYang2016IEQ,XU-YANG-ZHANG-XIE2019S-IEQ,YANG-ZHANG2020IEQ}, scalar auxiliary variable (SAV) schemes \cite{SHEN-XU-YANG2018SAV,SHEN-XU-YANG2019SAV,Akrivis-LI-LI2019RKSAV,huang2020highly}, Lagrange multiplier approach \cite{cheng2020LagrangeMultiplier,cheng2022new}.
However, among all these aforementioned methods, most methods preserving the energy dissipation law, use some modified forms of energy, which differ from the original definition found in the continuous partial differential equation setting. For example,  the IEQ/SAV method discretizes a reformulation of the gradient flow equation by introducing auxiliary variable to ensure the modified energy dissipation \cite{XiaofengYang2016IEQ,SHEN-XU-YANG2018SAV,Akrivis-LI-LI2019RKSAV}. Moreover, the modified energy of high-order IMEX backward differentiation formula method can be constructed \cite{liao2023BDF_IMEX} based on the Nevanlinna--Odeh  multiplier technique \cite{lubich2013backward, akrivis2015IEBDFstability, akrivis2015fully}. See also related topics such as phase field method for geometric moving interface \cite{feng2003numerical, du-feng2020phase}, the energy stability analysis for nonuniform time steps \cite{LIAO-TANG-ZHOU2020BDF,akrivis2024variable}, and the error estimates with only polynomial dependence on $\varepsilon^{-1}$ \cite{feng2004error, kovacs2017numerical, Akrivis22ErrorBDF, harder2022error} etc.

Constructing high-order schemes that preserve the original energy dissipation law is a significant and intriguing challenge. 
In \cite{Lubich2014energy}, 
the energy dissipation property of standard implicit Runge–Kutta methods was demonstrated for gradient systems with Lipschitz nonlinearity, provided that the time step was sufficiently small.
However, these schemes require to solve a nonlinear equation at each time step. Furthermore, while it is known that the time step size must be small, the exact limitations have not been explicitly defined. 
More recently, exponential time differencing Runge--Kutta (ETDRK) schemes have demonstrated the capability to dissipate the original energy. 
For example, in \cite{FU-Yang2022ETDRK2Energy}, the ETDRK1 and ETDRK2 schemes have
been shown to unconditionally preserves the original energy dissipation law.
Then the work was extended to high-order ETDRK schemes in \cite{FU-SHEN-YANG2024HigherOrder_ED}, where some positive definiteness conditions are given to ensure the original energy dissipation.
Based on these conditions, the authors discover some third-order ETDRK schemes which dissipate the original energy. However, since these conditions are not easy to meet for higher-order schemes, the existence of qualified ETDRK4 schemes remains unclear \cite{FU-SHEN-YANG2024HigherOrder_ED}.
In addition, the proof of the original energy dissipation law in \cite{FU-SHEN-YANG2024HigherOrder_ED} requires a Lipschitz condition assumption on the nonlinear term $f$. For the Allen--Cahn equations, this assumption will be automatically satisfied if the MBP is preserved. However, only the ETDRK1 and ETDRK2 schemes have been proven to preserve the MBP unconditionally \cite{DU-JU-LI-QIAO2021ETD}.
Numerical observations have further suggested that third-order and higher-order ETDRK schemes do not preserve the MBP unconditionally. See also \cite{FU-TANG-YANG2022IMEX_Energy} for related discussion for IMEX-RK schemes up to third-order.
Designing arbitrarily high-order, unconditionally MBP-preserving, and energy-dissipative time stepping schemes remains a challenging task.
Some advances have been made in \cite{LI-YANG-ZHOU2020cutoff} by applying a cut-off post-processing technique to guarantee the MBP, and then in \cite{YANG-YUAN-ZHOU2022cutoff}, by combining the high-order SAV Runge--Kutta method \cite{Akrivis-LI-LI2019RKSAV} to decrease the modified energy.


In this paper, we focus on \cref{eq:AC} with a general nonlinear term  $f: \mathbb{R} \rightarrow \mathbb{R}$ given by a continuously differentiable function satisfying:
\begin{equation}\label{eq:f(beta)<0}
    \exists \text{ a constant } \beta>0, \ \text{ such that } f(\beta) \le 0 \le f(-\beta).
\end{equation}
Equipped with homogeneous Neumann boundary condition, the MBP holds \cite{DU-JU-LI-QIAO2021ETD} in the sense that if the absolute value of the initial value is bounded by $\beta$, then the absolute value of the solution is also bounded by $\beta$ for all time, i.e.,
\begin{equation}\label{eq:MBP_continuous}
    \left\|u^{0}\right\|_{\infty} \le \beta \quad \Longrightarrow \quad \left\|u(t,x)\right\|_{\infty}\leq \beta, 
    \quad \forall t >0,
\end{equation}
where the maximum norm $\left\|\cdot\right\|_{\infty}$ is defined as
$
    \left\|u\right\|_{\infty} \coloneq \max_{x \in \bar{\Omega}} \left|u(x)\right| $ for any $ u \in C(\bar{\Omega}). 
$
The energy dissipation law is also satisfied with respect to the energy \cref{eq:E} with $F$ being a smooth potential function satisfying $F^{\prime}=-f$. 

Under the assumption that the nonlinearity $f$ is Lipschitz continuous on $\mathbb R$, we first prove that an arbitrary high-order ETDRK method will preserve the original energy dissipation law if the time step size $\tau$ is smaller than some constant $\tau_{\rm max}$ and the interpolation nodes are located on the interval $[0,\tau]$. 
However, it is known that the third-order ETDRK method of Allen--Cahn equation does not preserve the MBP and consequently the Lipschitz constant for $f$ can not be obtained explicitly.
We then propose a rescaling technique for ETDRK schemes to preserve MBP unconditionally, where the interpolation polynomial is adjusted slightly without compromising the convergence order.
We also prove the original energy dissipation law of these rescaled ETDRK methods for small time step, without assuming the Lipschitz continuity of $f$ on $\mathbb R$.
The rigorous convergence analysis of arbitrarily high-order rescaled ETDRK method is provided. 
To the best of our knowledge, this is the first work on arbitrarily high-order, MBP and energy dissipation preserving ETDRK schemes for the Allen--Cahn type gradient flows.

 The rest of this paper is organized as follows. In Section \ref{section:original energy}, we first introduce a class of arbitrarily high-order ETDRK schemes and then prove their preservation of the energy dissipation law under specific time step size restrictions. 
 In Section \ref{section:rescaling technique}, we introduce a rescaling technique that allows these schemes to preserve the MBP unconditionally and the energy dissipation law with small time steps, followed by analysis of temporal convergence.
 Numerical experiments are carried out to validate the theoretical results and demonstrate the performance of the proposed schemes in Section \ref{section:numerical experiments}. Finally, some concluding remarks are given in Section \ref{section:conclusions}.

\section{Original energy dissipation of ETDRK methods}\label{section:original energy}
In this section, we first introduce a class of  arbitrarily high-order ETDRK methods for solving the Allen--Cahn equation \cref{eq:AC}, following the abstract framework outlined in \cite{DU-JU-LI-QIAO2021ETD}. Then, we prove that the original energy decreases under a certain restriction of time-step size.

Initially, we establish the original energy dissipation law under the assumption of the Lipschitz continuity of $f$. Subsequently, in Section \ref{section:rescaling technique}, we apply the MBP to relax this assumption.
Assume that $f$ satisfies the Lipschitz condition with a Lipschitz constant $C_l$, i.e.,
\begin{equation}\label{eq:Lipschitz}
|f(u)-f(v)| \le C_l |u-v| \quad \forall u, v \in \mathbb{R}.
\end{equation}
Following this, we introduce a stabilizing constant $\kappa$, satisfying: 
\begin{equation}\label{eq:kappa cond}
\kappa \ge C_l.
\end{equation}
By adding and subtracting a stabilization term $\kappa {u}$ to the Allen--Cahn equation \cref{eq:AC}, we derive an equivalent form of \cref{eq:AC}
\begin{equation}\label{eq:AC stable form}
    u_{t}=\mathcal{L}_{\kappa} u+\mathcal{N}(u), \quad {x} \in \Omega,~ t>0,
\end{equation}
where the linear operator $\mathcal L_\kappa$ and nonlinear operator $\mathcal N$ are defined as
\begin{equation}\label{eq:L_kappa, N}
        \mathcal{L}_{\kappa}\coloneqq \varepsilon^2 \Delta - \kappa \mathcal{I},\quad
    \mathcal{N} \coloneqq f+\kappa \mathcal{I}, 
\end{equation}
and $\mathcal{I}$ denotes the identity operator. 


Given a positive integer $N$, let the time interval $[0,T]$ be divided into $N$ subintervals with a uniform time step $\tau=T/N$, and define $t_n=n \tau, n=0,1,\cdots,N$.
To solve the Allen--Cahn equation \cref{eq:AC}, we focus on the equivalent equation \cref{eq:AC stable form} over the interval $\left[t_n, t_{n+1}\right]$, or equivalently $w^{n}(x,s)=u\left(x,t_n+s\right)$  satisfying the system
\begin{equation} \label{eq:w(s)}
    \left\{
    \begin{aligned}
        &\partial_{s} w^{n}  =\mathcal{L}_{\kappa} w^{n}+\mathcal{N}(w^{n}), &  & {x} \in \Omega ,~ s \in(0, \tau], \\
       & w^{n}(x,0)    =u\left(x,t_{n}\right),                         &  & {x} \in \bar\Omega ,
    \end{aligned}
    \right.
\end{equation}
equipped with homogeneous Neumann boundary condition.
The key idea of ETDRK is applying Duhamel's principle to this system 
to deduce
\begin{equation}\label{eq:Duhamel}
    w^{n}(x, \tau)= \mathrm{e}^{\tau\mathcal{L}_\kappa} w^{n}(0, x)+ \int_0^\tau \mathrm{e}^{(\tau-s)\mathcal{L}_\kappa} \mathcal{N}[w^{n}(x,s)] \mathrm{~d} s
\end{equation}
and then approximating the nonlinear function $\mathcal{N}[u(t_n+s)]$ in the integral. For instance, one straightforward approach is to set $\mathcal{N}[u(t_n+s)] \approx \mathcal{N}[u(t_n)]$, which introduces a truncation error of $O(\tau)$. This approximation leads to a first-order scheme, the ETDRK1 scheme, i.e., for $n \ge 0$,
\begin{equation}\label{eq:ETDRK1}
    u^{n+1}=\mathrm{e}^{\tau\mathcal{L}_\kappa} u^{n}+ \left( \mathrm{e}^{\tau\mathcal{L}_\kappa}-\mathcal{I} \right)\mathcal{L}_\kappa^{-1} \mathcal{N}(u^{n}),
\end{equation}
where $u^n$ is the numerical solution approximating the exact solution at $t = t_n$, and $u^0$ is the initial condition given in \cref{eq:AC}.

Advancing beyond the basic ETDRK1 scheme, one can derive higher-order ETDRK schemes by employing  interpolation polynomial to estimate the nonlinear term $\mathcal{N}[u(t_n+s)]$ for $s\in[0,\tau]$. 
For any integer $r \geq 0$, we construct the $(r+1)$th-order ETDRK scheme by selecting $r+1$ nodes $\left\{0=a_{r, 0}<a_{r, 1}<\cdots<a_{r, r} \leq 1\right\}$ within the interval $[0,1]$. 
We then interpolate the function $\mathcal{N}\left[u\left(t_n+s\right)\right]$ at the times $\left\{a_{r, k} \tau\right\}_{k=0}^r$ to form a polynomial $P_r^n(s)$ of degree $r$, which results in a truncation error of $O\left(\tau^{r+1}\right)$.
Then, we approximate $\mathcal{N}\left[u\left(t_n+s\right)\right]$ by $P^{n}_r(s)$ to obtain the following $(r+1)$th-order ETDRK scheme, i.e., compute $u^{n+1}=w_{r+1}^n(\tau)$ by solving the following linear partial differential equation
\begin{equation}\label{eq:ETDRKr}
    \left\{
    \begin{aligned}
       & \partial_{s} w_{r+1}^{n}   =\mathcal{L}_{\kappa} w_{r+1}^{n}+P^{n}_{r}(s), &  & {x} \in \Omega , ~ s \in(0, \tau], \\
            & w_{r+1}^{n}(x,0)      =u^n({x}),                                  &  & {x} \in \bar{\Omega},
    \end{aligned}
    \right.
\end{equation}
equipped with homogeneous Neumann boundary condition. 
Here, $u^n$ is the numerical solution of the ETDRK$(r+1)$ scheme at $t_n$ and $u^0$ is the initial condition given in \cref{eq:AC}. 
More precisely, the polynomial $P_{r}(s)$ can be written as
\begin{equation}\label{eq:Pr}
  P^{n}_{r}(s) = \mathcal{N}(u^{n}) + c_{r,1}\frac{s}{\tau}  + c_{r,2}\left(\frac{s}{\tau}\right) ^{2}+ \cdots + c_{r,r}\left(\frac{s}{\tau}\right) ^{r},
\end{equation}
where the coefficients $\left\{c_{r,k}\right\}_{k=1}^{r}$ is determined by
\begin{equation}\label{eq:vandermonde V_r}
    \begin{pmatrix}
        \left( a_{r,1} \right) ^1&		\left( a_{r,1} \right) ^2&		\cdots&		\left( a_{r,1} \right) ^{r}\\
        \left( a_{r,2} \right) ^1&		\left( a_{r,2} \right) ^2&		\cdots&		\left( a_{r,2} \right) ^{r}\\
        \vdots&		\vdots&		\ddots&		\vdots\\
        \left( a_{r,r} \right) ^1&		\left( a_{r,r} \right) ^2&		\cdots&		\left( a_{r,r} \right) ^{r}\\
    \end{pmatrix} 
    \begin{pmatrix}
        c^{n}_{r,1} \\
        c^{n}_{r,2} \\
        \vdots \\
        c^{n}_{r,r}
    \end{pmatrix}=
    \begin{pmatrix}
        \mathcal{N}\left[w^{n}_{r}(a_{r,1}\tau)\right] -\mathcal{N}(u^{n}) \\
        \mathcal{N}\left[w^{n}_{r}(a_{r,2}\tau)\right] -\mathcal{N}(u^{n})\\
        \vdots \\
        \mathcal{N}\left[w^{n}_{r}(a_{r,r}\tau)\right] -\mathcal{N}(u^{n})
    \end{pmatrix}.
\end{equation}
Here, $w^{n}_{r}(s)$ is the solution of the ETDRK$r$ scheme, which means that we construct the high-order ETDRK schemes in an iterative way.
To simplify notation, in \cref{eq:vandermonde V_r}, let $V_r$ represent the Vandermonde matrix, and $\boldsymbol{c}^{n}_{r}$ and $\boldsymbol{d}^{n}_{r}$ denote the coefficients vector and right-hand side vector, respectively. Thus, \cref{eq:vandermonde V_r} can be written as $V_r\boldsymbol{c}^{n}_{r} = \boldsymbol{d}^{n}_{r}$.

Up to now, we have only presented the differential forms of ETDRK$(r+1)$ schemes, but we also need explicit formulas which can be directly implemented for computations.
First, for any integer $j\ge 0$, we  
have the integration
\begin{equation}\label{eq:s^k e(t-s)L}
  \begin{aligned}
    \int_{0}^{t}  \mathrm{e}^{(t-s)\mathcal{L} _{\kappa}} s^j \mathrm{~d}s 
    &= j!\left(\mathrm{e}^{t\mathcal{L} _{\kappa}}-\sum_{k=0}^j{\frac{1}{k!}(t\mathcal{L} _{\kappa})^k}\right)\mathcal{L}_{\kappa}^{-j-1} = j!t^{j+1} \phi_{j+1}(t\mathcal{L}_{\kappa}) , 
  \end{aligned}
\end{equation}
where 
\begin{equation}\label{eq:psi_k def}
    \phi_{j+1}(z) \coloneqq  \left( \mathrm{e}^{z}- \sum_{k=0}^{j}\frac{z^k}{k!} \right)z^{-(j+1)}.
\end{equation}
Then, applying Duhamel's principle for the system \cref{eq:ETDRKr}, we have
\begin{equation}\label{eq:explicit_fomula}
    \begin{aligned}
        w^{n}_{r+1}(s)  
        & = \mathrm{e}^{s\mathcal{L}_{\kappa}}u^n + \int_{0}^{s} \mathrm{e}^{(s-\sigma)\mathcal{L} _{\kappa}} P_{r}(\sigma) \mathrm{~d}\sigma  \\
        & = \mathrm{e}^{s\mathcal{L}_{\kappa}}u^n +  \sum_{j=0}^{r}j!s^{j+1}\phi_{j+1}\left(s\mathcal{L}_{\kappa}\right) \frac{c_{r,j}}{\tau^j} \\
        & = \mathrm{e}^{s\mathcal{L}_{\kappa}}u^n + \left( \mathrm{e}^{s\mathcal{L}_\kappa}-\mathcal{I} \right)\mathcal{L}_\kappa^{-1} \mathcal{N}(u^{n}) + \tau \sum_{j=1}^{r}j! \left(\frac{s}{\tau} \right)^{j+1} \phi_{j+1} \\
        & = w_{1}^{n}(s) + \tau \sum_{j=1}^{r}j! \left(\frac{s}{\tau} \right)^{j+1} \phi_{j+1}
 \left(s\mathcal{L}_{\kappa}\right)c_{r,j}^{n}.
    \end{aligned}
\end{equation}
Therefore, letting $s = \tau$, we have the explicit formula of the ETDRK$(r+1)$ scheme:
\begin{equation}\label{eq:explicit_fomula_tau}
    u^{n+1} \coloneqq  w^{n}_{r+1}(\tau) = w^{n}_{1}(\tau) + \tau \sum_{j=1}^{r}j!\phi_{j+1}\left(\tau\mathcal{L}_{\kappa}\right)c_{r,j}^{n}.
\end{equation}

\begin{lemma}\label{lemma:psi_k bound}
For any $t>0$, consider the negative-definite operator $t\mathcal{L}_{\kappa}=t(\varepsilon^{2}\Delta-\kappa \mathcal{I})$ with homogeneous Neumann boundary condition.
For any integer $k \ge 0$ and $\lambda \in (0,1)$, the following inequality holds:
    \begin{equation}\label{eq:psi_k bound}
    \left\|\lambda^{k}\phi_k\left(\lambda t\mathcal{L}_{\kappa}\right)v\right\| \le  \left\|\phi_{k}(t\mathcal{L} _{\kappa})v\right\| \le \frac{1}{k!} \left\|v\right\|,  
    \quad \forall v \in L^{2}(\Omega),
    \end{equation}
    where $\|\cdot\|$ is short for $\|\cdot\|_{L^2(\Omega)}$.
\end{lemma}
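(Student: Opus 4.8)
The plan is to reduce the estimate to a one-variable inequality via the spectral theorem, then prove that scalar inequality by a Taylor–remainder estimate together with a monotonicity argument in $\lambda$. Since $\mathcal L_\kappa=\varepsilon^2\Delta-\kappa\mathcal I$ with homogeneous Neumann boundary conditions is self-adjoint and negative definite on $L^2(\Omega)$, it has a real eigenvalue sequence $\{\ell_m\}$ with $\ell_m\le-\kappa<0$ and an $L^2$-orthonormal eigenbasis $\{e_m\}$; for $v=\sum_m\hat v_m e_m$ and any continuous $\varphi$ on $(-\infty,0]$ the functional calculus gives $\|\varphi(t\mathcal L_\kappa)v\|^2=\sum_m|\varphi(t\ell_m)|^2|\hat v_m|^2$. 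Noting that each $\phi_j$ of \cref{eq:psi_k def} extends to an entire function with $\phi_j(0)=1/j!$ (so the apparent singularity from the $z^{-(j+1)}$ factor is harmless and the functional calculus applies), I would apply this identity with $\varphi(z)=\lambda^{k}\phi_k(\lambda z)$ and with $\varphi(z)=\phi_k(z)$ and bound the resulting sums termwise. Then \cref{eq:psi_k bound} follows once one shows, for $k\ge1$ and all real $z<0$, $\lambda\in(0,1)$, the chain
\[
  0<\lambda^{k}\phi_k(\lambda z)\le\phi_k(z)\le\frac1{k!}.
\]
(For $k=0$ one has $\phi_0=\exp$ and the only bound of interest, $\|e^{t\mathcal L_\kappa}v\|\le\|v\|$, is the standard semigroup contraction estimate.)

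For the two rightmost relations I would invoke Taylor's theorem with Lagrange remainder: for $k\ge1$ there is $\xi$ between $0$ and $z$ with $e^{z}-\sum_{j=0}^{k-1}z^{j}/j!=e^{\xi}z^{k}/k!$, hence $\phi_k(z)=e^{\xi}/k!$. When $z\le0$ one has $\xi\in[z,0]$, so $0<e^{\xi}\le1$ and therefore $0<\phi_k(z)\le1/k!$. In particular this already records that $\phi_j(y)>0$ for every $y<0$ and every $j\ge0$, a fact I will reuse.

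The remaining inequality $\lambda^{k}\phi_k(\lambda z)\le\phi_k(z)$ I would obtain by monotonicity in $\lambda$. Using the truncated-series form $\lambda^{k}\phi_k(\lambda z)=z^{-k}\bigl(e^{\lambda z}-\sum_{j=0}^{k-1}(\lambda z)^{j}/j!\bigr)$ and differentiating in $\lambda$, the series reindexes down by one and yields $\dfrac{d}{d\lambda}\bigl[\lambda^{k}\phi_k(\lambda z)\bigr]=\lambda^{k-1}\phi_{k-1}(\lambda z)$ (for $k=1$ this is just $e^{\lambda z}$). Since $\lambda z<0$, the positivity of $\phi_{k-1}$ established above makes this derivative positive on $(0,1]$, so the function $g(\lambda)=\lambda^{k}\phi_k(\lambda z)$ is increasing there and $g(\lambda)\le g(1)=\phi_k(z)$; combined with $g(\lambda)>0$ this closes the chain, and hence proves \cref{lemma:psi_k bound}.

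The statement is elementary, so I do not anticipate a real obstacle, only two points requiring care. First, the spectral reduction must be set up correctly — that the Neumann Laplacian is self-adjoint with the stated (strictly negative) spectrum and that the $\phi_j$'s, being entire and bounded on $(-\infty,0]$, are legitimate inputs to the functional calculus. Second, one must verify the derivative identity $\tfrac{d}{d\lambda}[\lambda^{k}\phi_k(\lambda z)]=\lambda^{k-1}\phi_{k-1}(\lambda z)$ carefully, since it drives the monotonicity step; it is nothing more than the observation that differentiating a truncated exponential series shifts its index. I would also note that the first inequality in \cref{eq:psi_k bound} carries content only for $k\ge1$: the factor $\lambda^{k}$ is precisely what offsets the value $\phi_k(0)=1/k!$, whereas for $k=0$ the map $\lambda\mapsto e^{\lambda z}$ is increasing on $z<0$ and only the contraction bound is relevant.
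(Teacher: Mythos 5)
Your proposal is correct and follows essentially the same route as the paper: a spectral reduction to a scalar inequality, the Lagrange-remainder identity $\phi_k(z)=e^{\xi}/k!$ for the bounds $0<\phi_k(z)\le 1/k!$ on $z<0$, and monotonicity of $\lambda\mapsto\lambda^k\phi_k(\lambda z)$ (the paper's $h_k$) via the derivative identity $\frac{d}{d\lambda}[\lambda^k\phi_k(\lambda z)]=\lambda^{k-1}\phi_{k-1}(\lambda z)>0$. Your remark that the first inequality only carries content for $k\ge1$ (since $\lambda\mapsto e^{\lambda z}$ fails the monotonicity for $k=0$) is a fair observation about an edge case the paper glosses over, but it is immaterial since the lemma is only invoked with $k\ge1$.
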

\begin{proof}
    By the Lagrange remainder of Taylor expansion of $e^x$, we have
\begin{equation}\label{eq:temp_1}
    0<\phi_k(x)
    =\Big(\mathrm{e}^{x}- \sum_{j=0}^{k-1}\frac{x^j}{j!} \Big)x^{-k}
    =\frac{\mathrm{e}^{\xi }}{k!}
    < \frac{1}{k!},\quad \forall x<0,
\end{equation}
where $\xi \in (x,0)$. Let $(\mu_j,\varphi_j)_{j=1}^\infty$ be eigenpairs of the selfadjoint and positive definite operator $\mathcal{L}_\kappa$, and note that $\varphi_j$ forms a complete orthogonal basis of $L^2(\Omega)$. As a result, for any $v\in L^2(\Omega)$, there holds
\begin{equation}\label{eq:temp_11}
\left\|\phi_{k}(t\mathcal{L} _{\kappa})v\right\|^2
\le \sum_{j=1}^\infty \phi_{k}(t\mu_j)^2 |(v,\varphi_j)|^2
\le \frac{1}{k!} \sum_{j=1}^\infty |(v,\varphi_j)|^2
= \frac{1}{k!} \| v \|^2.
\end{equation}


Next, we consider the first inequality of \cref{eq:psi_k bound}. For any integer $k\ge 0$, let us define a function $h_k$ as follows:
\begin{equation}\label{eq:h_k}
    h_k(\lambda,x) \coloneq \lambda^{k}\phi_k(\lambda x) = \Big(\mathrm{e}^{\lambda x}- \sum_{j=0}^{k-1}\frac{(\lambda x)^j}{j!} \Big)x^{-k} , \quad \lambda >0.
\end{equation}
It is easy to check that
\[
\frac{\partial h_k}{\partial \lambda} = \Big( \mathrm{e}^{\lambda x}- \sum_{j=0}^{k-2}\frac{(\lambda x)^j}{j!} \Big)x^{-(k-1)} = \frac{e^{\zeta}}{(k-1)!} \lambda^{k-1}  >0, \quad \forall \lambda>0,  x \in \mathbb{R},
\]
where $\zeta$ is between 0 and $\lambda x$.
Thus, we obtain 
\begin{equation}\label{eq:tempa11}
    0 = h_k(0,x) < h_k\left(\lambda,x\right) < h_k(1,x)=\phi_k(x)
\end{equation}
for any $x \in \mathbb{R}$. This combined with the argument in \cref{eq:temp_11} immediately leads to the 
desired estimate $\Vert \phi _{k}( t\mathcal{L}_{\kappa }) v\Vert ^{2} - \left\Vert \lambda ^{k} \phi _{k}( \lambda t\mathcal{L}_{\kappa }) v\right\Vert ^{2}  \ge 0$.
\end{proof}


\begin{theorem}\label{theorem:energy decay main theorem}
Suppose that \cref{eq:f(beta)<0,eq:Lipschitz,eq:kappa cond} hold. 
Then the ETDRK$r$ scheme preserves the original energy dissipation law within a certain time-step size restriction. More precisely, for any integer $r\ge 1$, there exists a positive constant $\tau_{\max,r}$ independent of $\varepsilon$, such that
the solution $\left\{u^{n}\right\}_{n\ge 0} $ to the ETDRK$r$ scheme satisfies
    \[
        E\left(u^{n+1}\right) - E\left(u^{n}\right) \coloneqq E\left(w^{n}_{r}(\tau)\right) - E\left(w^{n}_{r}(0)\right) \le 0 , 
    \]
for all $\tau\le \tau_{\max,r}$. The time-step size restriction is  
\[
\tau_{\max,1} = + \infty, \quad \tau_{\max,r} = \frac{1}{4\kappa} \min \left\{{\sigma_{\min}\left(V_1\right)},\frac{\sigma_{\min}\left(V_2\right)}{2}, \ldots ,\frac{\sigma_{\min}\left(V_{r-1}\right)}{r-1}\right\}, \quad r\ge 2,
\]
where $\sigma_{\min}(A)$ represents the minimum singular value of a matrix $A$, and $V_{k}$ is the $k \times k$ Vandermonde matrix of interpolation nodes defined in \cref{eq:vandermonde V_r}.
\end{theorem}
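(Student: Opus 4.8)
The plan is to follow the exact energy balance along the ETDRK$r$ trajectory $s\mapsto w^n_r(s)$ on $[0,\tau]$ and to show that the only obstruction to dissipation, the contribution of the interpolation polynomial, is absorbed by the dissipation once $\tau\le\tau_{\max,r}$. Set $g^n\coloneqq\mathcal L_\kappa u^n+\mathcal N(u^n)$, so that $\partial_sw^n_r(0)=g^n$ and, by \cref{eq:explicit_fomula}, $\partial_sw^n_r(s)=\mathrm e^{s\mathcal L_\kappa}g^n+\sum_{j=1}^{r-1}j!(s/\tau)^j\phi_j(s\mathcal L_\kappa)c^n_{r-1,j}$. First I would differentiate $E$ along $w^n_r$; using $-\varepsilon^2\Delta w^n_r=-\mathcal L_\kappa w^n_r-\kappa w^n_r$ and $\partial_sw^n_r=\mathcal L_\kappa w^n_r+P^n_{r-1}(s)$ one gets
\[
\tfrac{\mathrm d}{\mathrm ds}E(w^n_r(s))=-\|\partial_sw^n_r(s)\|^2+\big(\mathcal N(u^n)-\mathcal N(w^n_r(s)),\partial_sw^n_r(s)\big)+\big(Q^n_{r-1}(s),\partial_sw^n_r(s)\big),
\]
with the polynomial correction $Q^n_{r-1}(s)\coloneqq P^n_{r-1}(s)-\mathcal N(u^n)=\sum_{j=1}^{r-1}(s/\tau)^jc^n_{r-1,j}$. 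Integrating over $[0,\tau]$ and letting $\Phi$ be a primitive of $\mathcal N$, which is convex since $\Phi''=f'+\kappa\ge\kappa-C_l\ge0$ by \cref{eq:kappa cond}, the middle term integrates to $-I_1$ with $I_1\coloneqq\int_\Omega[\Phi(w^n_r(\tau))-\Phi(u^n)-\Phi'(u^n)(w^n_r(\tau)-u^n)]\,\mathrm dx\ge0$. Hence, writing $D_r\coloneqq\int_0^\tau\|\partial_sw^n_r(s)\|^2\,\mathrm ds$,
\[
E(w^n_r(\tau))-E(u^n)=-D_r-I_1+\int_0^\tau\big(Q^n_{r-1}(s),\partial_sw^n_r(s)\big)\,\mathrm ds,
\]
and by Young's inequality it would remain to prove $\int_0^\tau\|Q^n_{r-1}(s)\|^2\,\mathrm ds\le D_r$. (For $r=1$, $Q^n_0\equiv0$, which is exactly why $\tau_{\max,1}=+\infty$.)

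The heart of the argument is a recursive estimate of the interpolation coefficients. Since \cref{eq:f(beta)<0,eq:kappa cond} make $\mathcal N=f+\kappa\mathcal I$ monotone and $2\kappa$-Lipschitz, and $V_{m-1}\boldsymbol c^n_{m-1}=\boldsymbol d^n_{m-1}$ with $(\boldsymbol d^n_{m-1})_k=\mathcal N[w^n_{m-1}(a_{m-1,k}\tau)]-\mathcal N(u^n)$, I would bound $A_{m-1}\coloneqq\sum_j\|c^n_{m-1,j}\|\le\sqrt{m-1}\,\sigma_{\min}(V_{m-1})^{-1}\|\boldsymbol d^n_{m-1}\|_2\le 2\kappa(m-1)\sigma_{\min}(V_{m-1})^{-1}\max_k\|w^n_{m-1}(a_{m-1,k}\tau)-u^n\|$. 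Integrating the explicit formula for $\partial_sw^n_{m-1}$ and using \cref{lemma:psi_k bound}, in the slightly sharper form $\|\lambda^k\phi_k(\lambda t\mathcal L_\kappa)v\|\le\tfrac{\lambda^k}{k!}\|v\|$ for $\lambda\in(0,1)$ (which comes out of the same computation of $\partial_\lambda h_k$ in the proof of that lemma, now keeping $\mathrm e^{\zeta}<1$ for $\zeta<0$), gives $\|w^n_{m-1}(a\tau)-u^n\|\le\tau(G^n+\tfrac12 A_{m-2})$ for every node $a\in(0,1]$, where $G^n\coloneqq\|\phi_1(\tau\mathcal L_\kappa)g^n\|=\tau^{-1}\|w^n_1(\tau)-u^n\|$ and $A_0\coloneqq0$. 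Together these yield the recursion $A_{m-1}\le\tfrac{2\kappa(m-1)\tau}{\sigma_{\min}(V_{m-1})}(G^n+\tfrac12 A_{m-2})$; the definition of $\tau_{\max,r}$ is precisely what forces every prefactor to be $\le\tfrac12$ for $m-1\le r-1$, so the recursion is contractive and yields the uniform bound $A_m\le\tfrac23 G^n$ for all $m\le r-1$.

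To close I would chain the following. From $A_{r-1}\le\tfrac23 G^n$ one has $\|Q^n_{r-1}(s)\|\le(s/\tau)A_{r-1}\le\tfrac23(s/\tau)G^n$, hence $\int_0^\tau\|Q^n_{r-1}(s)\|^2\,\mathrm ds\le\tfrac{4}{27}\tau(G^n)^2$; from $w^n_1(\tau)-u^n=\int_0^\tau\mathrm e^{s\mathcal L_\kappa}g^n\,\mathrm ds$ and Cauchy--Schwarz, $\tau(G^n)^2\le D_1\coloneqq\int_0^\tau\|\mathrm e^{s\mathcal L_\kappa}g^n\|^2\,\mathrm ds$; and writing $\partial_sw^n_r(s)=\mathrm e^{s\mathcal L_\kappa}g^n+\delta_r(s)$ with $\|\delta_r(s)\|\le(s/\tau)A_{r-1}\le\tfrac23(s/\tau)G^n$ (again the sharpened $\phi_k$-bound), the triangle inequality in $L^2(0,\tau;L^2(\Omega))$ gives $\sqrt{D_r}\ge\sqrt{D_1}-\|\delta_r\|_{L^2(0,\tau;L^2)}\ge(1-\tfrac{2}{3\sqrt3})\sqrt{D_1}$, so $D_1$ is at most an explicit multiple of $D_r$. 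Combining, $\int_0^\tau\|Q^n_{r-1}(s)\|^2\,\mathrm ds\le\tfrac{4}{27}D_1\le D_r$, whence the energy identity gives $E(w^n_r(\tau))-E(u^n)\le-I_1\le0$. Every constant that appears ($\tfrac23$, $\tfrac4{27}$, the $D_1/D_r$ ratio) is absolute or depends only on $r$ and the quadrature nodes and never on $\varepsilon$, which is why $\tau_{\max,r}$ is $\varepsilon$-independent.

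The hard part will be the recursive coefficient estimate and the bookkeeping of $\varepsilon$-independent constants. The delicate point is that one must \emph{not} bound the correction terms by $\|g^n\|$: when $g^n$ is highly oscillatory, $\mathrm e^{s\mathcal L_\kappa}g^n$—and therefore the dissipation $D_r$—can be far smaller than $\tau\|g^n\|^2$, so one has to carry the regularized quantity $G^n=\|\phi_1(\tau\mathcal L_\kappa)g^n\|$ throughout, exploiting the identity $w^n_1(\tau)-u^n=\tau\phi_1(\tau\mathcal L_\kappa)g^n$ to obtain $\tau(G^n)^2\le D_1\lesssim D_r$, and to use the $\phi_k$-bound with the extra factor $\lambda^k$ rather than only \cref{lemma:psi_k bound}. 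A secondary difficulty is structural: since $w^n_m$ is built iteratively from the nodal values of $w^n_{m-1}$, the whole estimate is an induction on $r$, and each level $m\le r-1$ contributes the factor $\sigma_{\min}(V_{m-1})/(m-1)$ (equivalently $\sigma_{\min}(V_k)/k$, $1\le k\le r-1$) to the minimum defining $\tau_{\max,r}$.
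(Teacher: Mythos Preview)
Your proposal is correct and follows essentially the same strategy as the paper: both reduce energy dissipation to an inductive bound on the interpolation coefficients (the paper uses $\||\boldsymbol c^n_{r-1}|\|$, you use $A_{r-1}=\sum_j\|c^n_{r-1,j}\|$) against the reference quantity $G^n=\tau^{-1}\|w^n_1(\tau)-u^n\|$, and both establish this recursively via $\boldsymbol c^n_{m-1}=V_{m-1}^{-1}\boldsymbol d^n_{m-1}$, the $2\kappa$-Lipschitz bound on $\mathcal N$, and Lemma~\ref{lemma:psi_k bound}, with the definition of $\tau_{\max,r}$ making each recursion prefactor $\le\tfrac12$. The only notable bookkeeping variation is in how the dissipation is measured: where you keep the full $D_r=\int_0^\tau\|\partial_sw^n_r\|^2\,\mathrm ds$ and afterwards compare $D_1$ to $D_r$ through the correction $\delta_r(s)=\sum_j j!(s/\tau)^j\phi_j(s\mathcal L_\kappa)c^n_{r-1,j}$, the paper applies Cauchy--Schwarz in time at the outset to replace $D_r$ by $\tau^{-1}\|u^{n+1}-u^n\|^2$ and then uses the triangle inequality $\|u^{n+1}-u^n\|\ge\|w^n_1(\tau)-u^n\|-\|u^{n+1}-w^n_1(\tau)\|$; the two routes yield slightly different absolute constants but the same $\tau_{\max,r}$.
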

\begin{proof}
We divide our proof into two parts. In the first part, we present an inequality that is a sufficient condition for the original energy dissipation law. In the second part, we use mathematical induction to establish this inequality, thereby obtaining a time step restriction to preserve the energy dissipation for arbitrarily high-order ETDRK methods.

{\em Part I.} We propose a sufficient condition for the original energy dissipation law and set it as the objective for mathematical induction.
From \cref{eq:E} and \cref{eq:dE/dt}, we derive the following expression for the derivative of the original energy of $w_{r}^{n}$ in \cref{eq:ETDRKr}:
\begin{equation}\label{eq:dE(w)/dt}
        \frac{\mathrm{d}}{\mathrm{d} s} E(w_{r}^{n} )=\left(\frac{\delta E(w_{r}^{n} )}{\delta w_{r}^{n}} ,\frac{\partial w_{r}^{n}}{\partial s}\right)
        =\left( -\mathcal{L}_{\kappa } w_{r}^{n} -\mathcal{N}\left( w_{r}^{n}\right) ,\frac{\partial w_{r}^{n}}{\partial s}\right).
\end{equation}
Then, the difference of original energy between two adjacent moments can be expressed as 
\begin{equation}\label{eq:E_2-E_1}
\begin{aligned}
    E\left( u^{n+1}\right) -E\left( u^{n}\right) 
    &= \int_{0}^{\tau} \left( -\mathcal{L}_{\kappa } w_{r}^{n} -\mathcal{N}\left( w_{r}^{n}\right) ,\frac{\partial w_{r}^{n}}{\partial s}\right) \mathrm{~d}s \\
    &=\left(\int _{0}^{\tau }\left(-\mathcal{L}_{\kappa } w^{n}_{r} -\mathcal{N}( w^{n}_{r})\right)   \partial_s w_{r}^{n} \mathrm{~d} s,1\right). 
\end{aligned}
\end{equation}
For any $x \in \Omega$, we have
\begin{equation}\label{eq:int_N}
\begin{aligned}
    \int _{0}^{\tau }\mathcal{N} (w_{r}^{n} )\partial_s w_{r}^{n} \mathrm{~d} s 
    &= \left[-F( u^{n+1}) +\frac{\kappa }{2} \left(u^{n+1}\right) ^{2}\right] -\left[-F( u^{n}) +\frac{\kappa }{2} \left(u^{n}\right) ^{2}\right].
\end{aligned}
\end{equation}
By the Lipschitz condition assumption \cref{eq:Lipschitz} of $f=-F^{\prime}$ and the condition \cref{eq:kappa cond},
we have
\begin{equation}\label{eq:int_N_1}
    \left[-F( u^{n+1}) \right] -\left[-F( u^{n}) \right] \ge f(u^{n})\left(u^{n+1}-u^{n}\right)-\frac{\kappa}{2}\left(u^{n+1}-u^{n}\right)^{2}.  
\end{equation}
Substituting \cref{eq:int_N_1} into \cref{eq:int_N}, we have
\begin{equation}\label{eq:nonamea}
    \begin{aligned}
        \int _{0}^{\tau }\mathcal{N} (w_{r}^{n} )\partial_s w_{r}^{n} \mathrm{~d} s  \ge f(u^{n})\left(u^{n+1}-u^{n}\right)+{\kappa}u^{n}\left(u^{n+1}-u^{n}\right)  
      = \mathcal{N}\left( u^{n}\right)\left( u^{n+1} -u^{n}\right).
    \end{aligned}
\end{equation}
Substituting \cref{eq:nonamea} into \cref{eq:E_2-E_1}, and by the differential form of ETDRK$r$ scheme \cref{eq:ETDRKr}, i.e. $\partial_{s} w_{r}^{n} =\mathcal{L}_{\kappa} w_{r}^{n}+P^{n}_{r-1}(s)$, 
we have
    \begin{align}\label{eq:E_2-E_1 process}
  E\left( u^{n+1}\right) -E\left( u^{n}\right)  
    \notag &\le \left(\int _{0}^{\tau }\left(-\partial_s  w _{r}^{n} +P_{r-1}^{n} (s) -\mathcal{N}(u^{n}) \right)  \partial_s w_{r}^{n} \mathrm{~d} s ,1\right) \\
    \notag &= \left( - \int _{0}^{\tau }\left(\partial_s  w _{r}^{n}\right)^{2}  \mathrm{~d} s + \int _{0}^{\tau }\left(P_{r-1}^{n} (s) -\mathcal{N}(u^{n}) \right)  \partial_s w_{r}^{n} \mathrm{~d} s ,1\right) \\
    \notag &\le  \left( -\frac{1}{2} \int _{0}^{\tau}\left(\partial_s  w _{r}^{n}\right)^{2} \mathrm{~d} s + \frac{1}{2}\int _{0}^{\tau}\left(P_{r-1}^{n} (s) -\mathcal{N}(u^{n}) \right)^{2} \mathrm{~d} s ,1\right)\\
     &= -\frac{1}{2\tau} \left(  \int _{0}^{\tau}1\mathrm{~d} s \int _{0}^{\tau}\left(\partial_s  w _{r}^{n}\right)^{2} \mathrm{~d} s ,1\right) +  \frac{1}{2}\left(\int _{0}^{\tau}\left(P_{r-1}^{n} (s) -\mathcal{N}(u^{n}) \right)^{2} \mathrm{~d} s ,1\right)\\
    \notag &\le  -\frac{1}{2\tau} \left(  \left( \int _{0}^{\tau} \partial_s  w_{r}^{n} \mathrm{~d} s \right)^{2} ,1\right) +  \frac{1}{2}\left(\int _{0}^{\tau}\left(P_{r-1}^{n} (s) -\mathcal{N}(u^{n}) \right)^{2} \mathrm{~d} s ,1\right)\\
    \notag &= -\frac{1}{2\tau} \left\|u^{n+1} -u^{n}\right\|^{2} + \frac{1}{2}\left(\int _{0}^{\tau }\left( \sum _{k=1}^{r-1} \left({s}/{\tau}\right)^{k} c_{r-1,k}^{n} \right)^{2}\mathrm{~d}s ,1\right)  \\
    \notag &\le -\frac{1}{2\tau} \left\|u^{n+1} -u^{n}\right\|^{2}+\frac{1}{2} {\tau(r-1)}\left(\sum_{k=1}^{r-1}\left(c_{r-1,k}^{n}\right)^{2} ,1\right)  .
    \end{align}
To simplify notations, we use the notation which was defined in \cref{eq:vandermonde V_r}, i.e.
\[
    \boldsymbol{c}^{n}_{r-1} = \left( c_{r-1,1}^{n},c_{r-1,2}^{n}, \ldots ,c_{r-1,r-1}^{n} \right) ^{\mathsf{T}},
\]
and define the 2-norm of vectors as 
\[
    \left|\boldsymbol{c}^{n}_{r-1}\right|  \coloneq \left( \sum_{k=1}^{r-1}\left(c_{r-1,k}^{n}\right)^{2} \right) ^{\frac{1}{2}}. 
\]
Then, we have
\begin{equation}\label{eq:Aim}
    E\left(u^{n+1}\right) -E\left(u^{n}\right) \le
    \frac{1}{2\tau}\left[{\tau^{2}(r-1)}\left\|\left|\boldsymbol{c}^{n}_{r-1}\right|\right\|^{2} - \left\| u^{n+1} -u^{n}\right\|^{2}  \right].
\end{equation}
To prove $E\left(u^{n+1}\right) -E\left(u^{n}\right) \le 0$, it is sufficient to prove that \[
    \tau\sqrt{r-1}\left\|\left|\boldsymbol{c}^{n}_{r-1}\right|\right\| \le  \left\| u^{n+1} -u^{n}\right\|.
\]
By the triangle inequality $\left\| u^{n+1}-u^{n} \right\| \ge \left\| w_{1}^{n}(\tau)-u^{n} \right\| -\left\| u^{n+1}-w_{1}^{n}(\tau) \right\|$, 
where $w_{1}^{n}(s)$ represents the solution of ETDRK1,
we can infer that if the following inequality
\begin{equation}\label{eq:aim_0}
    \tau\sqrt{r-1} \left\|\left|\boldsymbol{c}^{n}_{r-1}\right|\right\| + \left\| u^{n+1}- w^{n}_{1}(\tau) \right\| \le \left\|w^{n}_{1}(\tau)-u^{n} \right\|
\end{equation}
holds, then $E\left(u^{n+1}\right) -E\left(u^{n}\right) \le 0$. 
From the explicit formula of ETDRK$r$ \cref{eq:explicit_fomula_tau}, we have
\begin{equation}\label{eq:ur-u1}
    u^{n+1} - w_{1}^{n}(\tau) = w_{r}^{n}(\tau) - w_{1}^{n}(\tau) = \tau\sum_{k=1}^{r-1}k!{\phi_{k+1}}(\tau\mathcal{L} _{\kappa})c_{r-1,k}^n.
\end{equation}
Using the Cauchy inequality $\left\|\sum_{i=1}^{m}v_i\right\|^{2}\le m \sum_{i=1}^{m} \left\|v_i\right\|^{2}$ for any integer $m\ge 1$, we have
\begin{align}\label{eq:norm ur-u1}
\notag    \left\|u^{n+1}-w_{1}^{n}(\tau)\right\|
    & = \tau \left\|\sum_{k=1}^{r-1}k!{\phi_{k+1}}(\tau\mathcal{L} _{\kappa})c_{r-1,k}^n\right\| 
    \le  \tau  \left[(r-1)\sum_{k=1}^{r-1}  \left\|k!{\phi_{k+1}}(\tau\mathcal{L} _{\kappa})c_{r-1,k}^n\right\|^{2}  \right] ^{\frac{1}{2}} \\
    &\le  \tau \sqrt{r-1}  \left(\sum_{k=1}^{r-1}  \left\|\frac{c_{r-1,k}^n}{k+1}\right\|^{2}  \right) ^{\frac{1}{2}} \le \frac{1}{2}   \tau \sqrt{r-1}    \left(\sum_{k=1}^{r-1}  \left\|{c_{r-1,k}^n}\right\|^{2}  \right) ^{\frac{1}{2}} \\
\notag    & = \frac{1}{2}   \tau \sqrt{r-1}    \left\|\left|\boldsymbol{c}^{n}_{r-1}\right|\right\|,
\end{align} 
where we have used Lemma \ref{lemma:psi_k bound} in the second inequality. 
Based on inequalities \cref{eq:aim_0,eq:norm ur-u1}, 
we can infer that if the following inequality
\begin{equation}\label{eq:aim_1}
\boxed{
    \frac{3}{2} \tau\sqrt{r-1} \left\|\left|\boldsymbol{c}^{n}_{r-1}\right|\right\| \le \left\| w_{1}^{n}(\tau)-u^{n} \right\|}
\end{equation}
holds, then $E\left(u^{n+1}\right) -E\left(u^{n}\right) \le 0$. 
Note that the right-hand side of \cref{eq:aim_1} is independent of $r$.

{\em Part II}. We aim to employ mathematical induction to demonstrate that, for any $r \ge 1$ there exists a specific constant, denoted as $\tau_{\max,r}$, such that the inequality \cref{eq:aim_1} holds true for all $0<\tau \le \tau_{\max,r}$.

Firstly, let us verify the case of ETDRK1.
When $r=1$, the left-hand side of \cref{eq:aim_1} is equal to 0, so \cref{eq:aim_1} holds for any $\tau>0$.
Therefore, $\tau_{\max, 1} = + \infty$.

Suppose that for fixed $r\geq 1$, there exists certain constant $\tau_{\max,r}>0$, such that \cref{eq:aim_1} holds for any $\tau \le  \tau_{\max,r}$.
Then we consider the case of $r+1$ based on the inductive hypothesis, i.e. we want to show that, there exists certain positive constant $\tau_{\max,r+1} \le  \tau_{\max,r}$, such that
\begin{equation}\label{eq:aim r}
    \frac{3}{2} \tau\sqrt{r} \left\|\left|\boldsymbol{c}^{n}_{r}\right|\right\| \le \left\| w_{1}^{n}(\tau)-u^{n}\right\|
\end{equation}
holds for any $\tau \le  \tau_{\max,r+1}$.
According to the interpolation \cref{eq:vandermonde V_r}, we have
\[
\boldsymbol{c}^{n}_{r}=V_r^{-1}\boldsymbol{d}^{n}_{r}.
\]
Note that $V_{r}$ is a Vandermonde matrix with different nodes and $V_{r}^{-1}$ exists. 
According to the theory of singular value decomposition (SVD), for any $A \in \mathbb{R}^{d \times d}$ and $\boldsymbol{v} \in \mathbb{R}^{d}$, for the 2-norm of vector $\left|\cdot\right| $, we have
\begin{equation}\label{eq:SVD}
  \left|A\boldsymbol{v}\right|
  \le \sigma_{\max}(A) \left|\boldsymbol{v}\right|,
\end{equation}
where $\sigma_{\max}\left(A\right) $ represents the maximum singular value of  $A$. 
Therefore, we have
\begin{equation}\label{eq:br sigma}
  \left|\boldsymbol{c}^{n}_{r}\right| = \left|V_r^{-1}\boldsymbol{d}^{n}_{r}\right|
  \le \sigma_{\max} \left(V_r^{-1}\right)  \left|\boldsymbol{d}^{n}_{r}\right|
  = \frac{1}{\sigma_{\min} \left(V_r\right)}  \left|\boldsymbol{d}^{n}_{r}\right|,
\end{equation}
where $\sigma_{\min} $ is the minimum singular value of a matrix. 
By the Lipschitz condition of $f$, we have $\left|\mathcal{N}(u)-\mathcal{N}(v)\right| \le 2\kappa\left|u-v\right|$ for any given $u$ and $v$.
Therefore,
\begin{equation}\label{eq:sum norm br} 
\begin{aligned}
    \left\|\left|\boldsymbol{d}^{n}_{r}\right|\right\|
    &=\left\|\left(\sum_{k=1}^{r}\left( \mathcal{N}\left[w^{n}_{r}(a_{r,k}\tau)\right] -\mathcal{N}(u^{n})\right)^{2} \right)^{\frac{1}{2}} \right\|\\
    &=\left(\sum_{k=1}^{r}\left\|\mathcal{N}\left[w^{n}_{r}(a_{r,k}\tau)\right]-\mathcal{N}(u^{n})  \right\|^{2}\right)^{\frac{1}{2}} \\
    &\le 2\kappa\left(\sum_{k=1}^{r}\left\|w^{n}_{r}(a_{r,k}\tau)-u^{n}\right\|^{2}\right)^{\frac{1}{2}}. 
\end{aligned}
\end{equation}
Thus, the left-hand side of \cref{eq:aim r} satisfies the following inequality
\begin{equation}\label{eq:induction_1}
    \frac{3}{2}\tau\sqrt{r}\left\|\left|\boldsymbol{c}^{n}_{r}\right|\right\| \le 
    \frac{3\sqrt{r}\kappa\tau}{\sigma_{\min}\left(V_r\right)}   \left(\sum_{k=1}^{r}\left\|w^{n}_{r}(a_{r,k}\tau)-u^{n}\right\|^{2}\right)^{\frac{1}{2}}. 
\end{equation}
According to the explicit formula of ETDRK$r$ \cref{eq:explicit_fomula}, for $k=1,2, \ldots ,r$, we have
\begin{equation}\label{eq:u k/r - u n}
\begin{aligned}
    w^{n}_{r}(a_{r,k}\tau)-u^{n} 
    &= w^{n}_{1}(a_{r,k}\tau) - u^{n} + \tau \sum_{j=1}^{r-1} j! \left(a_{r,k}\right)^{j+1} \phi_{j+1}\left(a_{r,k}\tau\mathcal{L}_{\kappa}\right)c^{n}_{r-1,j}.
\end{aligned}
\end{equation}

Following the approach in the proof of Lemma \ref{lemma:psi_k bound}, for any real number $\lambda \in (0,1)$, we have
\begin{equation}\label{eq:noname_4}
\begin{aligned}
    \left\|w_{1}^{n}(\lambda\tau)-u^{n} \right\|
    &=\left\|\tau \lambda\phi_1(\lambda\tau\mathcal{L}_\kappa) \left(\mathcal{L}_\kappa u^{n}+ \mathcal{N}(u^{n})\right)  \right\| \\
    &=\left\|\tau h_1(\lambda,\tau\mathcal{L}_\kappa)    \left(\mathcal{L}_\kappa u^{n}+ \mathcal{N}(u^{n})\right)  \right\| \\
    &\le \left\|\tau h_1(1,\tau\mathcal{L}_\kappa)    \left(\mathcal{L}_\kappa u^{n}+ \mathcal{N}(u^{n})\right)  \right\| \\
    &= \left\|\tau \phi_1(\tau\mathcal{L}_\kappa) \left(\mathcal{L}_\kappa u^{n}+ \mathcal{N}(u^{n})\right)  \right\| \\
    &= \left\|w_{1}^{n}(\tau)-u^{n} \right\|,
\end{aligned}
\end{equation}
where the function $h_{1}(\lambda,x)$ is defined as in \cref{eq:h_k}. Therefore, we have
\begin{equation}\label{eq:noname_5}
    \left\|w_{1}^{n}(a_{r,k}\tau)-u^{n}\right\| \le \left\|w_{1}^{n}(\tau)-u^{n}\right\|.
\end{equation}

Combining \cref{eq:u k/r - u n,eq:noname_5}, we have
\begin{align}\label{eq:noname_3}
    \notag &  \sum_{k=1}^{r}\left\|w_{r}^{n}(a_{r,k}\tau)-u^{n}\right\|^{2} \\
    \notag &\le  \sum_{k=1}^{r} \left[\left\|w_{1}^{n}(a_{r,k}\tau)-u^{n}\right\|+ \tau\left\| \sum_{j=1}^{r-1}j! \left({a_{r,k}}\right)^{j+1} \phi_{j+1}\left(a_{r,k}\tau\mathcal{L}_{\kappa}\right)c^{n}_{r-1,j}\right\|\right]^{2} \\
    &\le  \sum_{k=1}^{r} \left[\left\|w_{1}^{n}(\tau)-u^{n}\right\|+\tau \left\| \sum_{j=1}^{r-1}j! \left({a_{r,k}}\right)^{j+1} \phi_{j+1}\left(a_{r,k}\tau\mathcal{L}_{\kappa}\right)c^{n}_{r-1,j}\right\|\right]^{2} \\
    \notag &\le \sum_{k=1}^{r}  \left[\left\|w_{1}^{n}(\tau)-u^{n} \right\|+\tau
    \left((r-1)\sum_{j=1}^{r-1}\left\| j! \left({a_{r,k}}\right)^{j+1} \phi_{j+1}\left(a_{r,k}\tau\mathcal{L}_{\kappa}\right)c^{n}_{r-1,j}\right\|^2 \right)^{\frac{1}{2}}
    \right]^{2} \\
    \notag &\le \sum_{k=1}^{r}  \left[\left\|w_{1}^{n}(\tau)-u^{n} \right\|+\tau
    \left((r-1)\sum_{j=1}^{r-1}\left\| \frac{c
    ^{n}_{r-1,j}}{j+1} \right\|^2 \right)^{\frac{1}{2}}
    \right]^{2} \\
    \notag 
    &\le 
    r \left(\left\|w_{1}^{n}(\tau)-u^{n} \right\|+ \frac{1}{2}\tau \sqrt{r-1}
     \left\|\left|\boldsymbol{c}^{n}_{r-1}\right|\right\|
    \right)^{2} .
\end{align}
Thus, combining \cref{eq:induction_1} and \cref{eq:noname_3}, we have
\begin{equation}\label{eq:induction_2}
\begin{aligned}
    &\frac{3}{2}\tau\sqrt{r} \left\|\left|\boldsymbol{c}^{n}_{r}\right|\right\|  \le \frac{3r\kappa\tau}{\sigma_{\min}\left(V_r\right)}  \left(\left\|w_{1}^{n}(\tau)-u^{n} \right\|+ \frac{1}{2}\tau \sqrt{r-1}
     \left\|\left|\boldsymbol{c}^{n}_{r-1}\right|\right\| \right).  
\end{aligned}
\end{equation}
From inductive hypothesis \cref{eq:aim_1}, we know
\begin{equation}\label{eq:induc_hypothesis}
    \left\|\left|\boldsymbol{c}^{n}_{r-1}\right|\right\|  \le \frac{2}{3\tau \sqrt{r-1}} \left\|w_{1}^{n}(\tau)-u^{n}\right\|.
\end{equation}
Substituting \cref{eq:induc_hypothesis} into \cref{eq:induction_2}, we have
\begin{equation}\label{eq:noname_7}
    \begin{aligned}
        \frac{3}{2}\tau\sqrt{r} \left\|\left|\boldsymbol{c}^{n}_{r}\right|\right\|  
        \le  \frac{4r\kappa\tau}{\sigma_{\min}\left(V_r\right)}\left\|w_{1}^{n}(\tau)-u^{n} \right\|.    
    \end{aligned}
\end{equation}
To  achieve our aim \cref{eq:aim r}, it is sufficient to satisfy the following inequality:
\begin{equation}\label{eq:tempb1}
    \frac{4r\kappa\tau}{\sigma_{\min}\left(V_r\right)} \le 1.
\end{equation}
Therefore, then \cref{eq:aim r} holds for any
\[
    \tau \le  \tau_{\max,r+1} \coloneqq \min \left\{\tau_{\max,r}, \frac{\sigma_{\min}\left(V_r\right)}{4r\kappa}  \right\}.
\]
Since $\tau_{\max,1}=+\infty$,
we have 
\begin{equation}\label{eq:tau_max_r}
    \tau_{\max,r} = \frac{1}{4\kappa} \min \left\{{\sigma_{\min}\left(V_1\right)},\frac{\sigma_{\min}\left(V_2\right)}{2}, \ldots ,\frac{\sigma_{\min}\left(V_{r-1}\right)}{r-1}\right\}.
\end{equation}
Here, $V_{r}$ is the Vandermonde matrix of interpolation nodes, and one can use different nodes for different $r$. 
For example, if we use uniform nodes for any $r$, it is easy to know $\sigma_{\min}\left(V_{r-1} \right) \le \sigma_{\min}\left(V_r\right)$. 
Then the time-step restriction of uniform nodes is 
\[
   \tau \le  \tau_{\max,r} = \frac{\sigma_{\min}\left(V_{r-1}\right)}{4\kappa(r-1)}.
\]
Finally, by induction, we finish the proof. 
\end{proof}

\begin{remark}
    In this proof, we have used many coarse inequalities and the final coefficients might be further improved. 
    Readers interested in refining these coefficients are invited to explore further.
    However, the determining factor influencing the magnitude of $\tau_{\max,r}$ is the minimum singular value of $V_{r-1}$, and the impact of other coefficients is much smaller.
    From \cref{eq:tempb1}, we know that the maximum allowable time step, $\tau_{\max,r}$, is approximately proportional to the minimum singular value, $\sigma_{\min}(V_{r-1})$. 
    The Vandermonde matrix $V_{r}$, derived from the interpolation, depends on the interpolation nodes.
    We present the minimum singular values of Vandermonde matrices with uniform and Chebyshev nodes, in Table \ref{table:sigma_min_two_nodes}.
    It is easy to see that $\sigma_{\min}(V_{r-1})$ for both node types diminish exponentially with increasing $r$, significantly constraining the time-step size as $r$ increases.
    Since the magnitudes of both nodes are close, we use uniform nodes in the following paper.
    We present $\tau_{\max,r}$ for the Allen--Cahn equation with the Ginzburg--Landau function $f_{GL}=u-u^{3}$, $\kappa=2$ and uniform nodes in Table \ref{table:taur poly}.
    As showed, $\tau_{\max,r}$ decreases almost exponentially. However, our numerical experiments show that, even with larger time steps, the high-order ETDRK schemes still preserve the original energy dissipation law. This observation suggests that our theoretical analysis of $\tau_{\max}$ may not be sharp, and motivates us to relax the step-size constraint in future work.

    Fu et al. \cite{FU-Yang2022ETDRK2Energy,FU-SHEN-YANG2024HigherOrder_ED} proved that the ETDRK2 scheme using nodes $\{a_{1,0}=0,a_{1,1}=1\}$ unconditionally decreases the original energy. 
    In addition, they found special types of ETDRK3 schemes which unconditionally decrease the original energy in \cite{FU-SHEN-YANG2024HigherOrder_ED}.
    Due to the difficulty in satisfying the positive definiteness conditions in their theorem, an ETDRK4 scheme which unconditionally decreases the original energy has not yet been discovered.
    However, our new analysis of original energy dissipation is applicable for all higher-order ETDRK schemes with arbitrary interpolation nodes within $[0,1]$. 
\end{remark}

\begin{table}[htbp]
    \centering
    \footnotesize
    \caption{The minimum singular values of the Vandermonde matrix using uniform and Chebyshev nodes.}
    \label{table:sigma_min_two_nodes}
    \begin{tabular}{c|cccccc}
    \hline
    $r$ & 1 & 2 & 3 & 4 & 5 \\ \hline 
    $\sigma_{\min}(V_{r}) ~ \text{(Uniform)} $ & 1.000e+00 & 1.654e-01 & 2.745e-02 & 4.408e-03 & 6.807e-04 \\ \hline
        $\sigma_{\min}(V_{r}) ~ \text{ (Chebyshev)} $ & 1.000e+00 & 1.654e-01 & 3.395e-02 & 6.823e-03 & 1.338e-03 \\ \hline 
    \hline 
    $r$ & 6 & 7 & 8 & 9 & 10 \\ \hline 
    $\sigma_{\min}(V_{r}) ~ \text{(Uniform)}$ & 1.017e-04 & 1.481e-05 & 2.113e-06 & 2.971e-07 & 4.125e-08 \\ \hline 
    $\sigma_{\min}(V_{r}) ~ \text{ (Chebyshev)} $ & 2.575e-04 & 4.884e-05 & 9.157e-06 & 1.701e-06 & 3.136e-07 \\ \hline
    \end{tabular}
\end{table}



\begin{table}[htbp] 
    \centering
    \footnotesize
    \caption{The values of $\tau_{\max,r}$ for the Allen--Cahn equation with $f_{\text{GL}}(u)=u-u^{3}$, $\kappa=2$ and uniform interpolation nodes.}
    \label{table:taur poly}
    \begin{tabular}{c|ccccc}
    \hline
    $r$ & 1 & 2 & 3 & 4 & 5 \\ \hline 
    $\tau_{\max,r}$ &  $+\infty$ & 1.250e-01 & 1.034e-02 & 1.144e-03 & 1.378e-04 \\ \hline  \hline
    $r$ & 6 & 7 & 8 & 9 & 10 \\ \hline 
    $\tau_{\max,r}$ & 1.702e-05 & 2.118e-06 & 2.644e-07 & 3.302e-08 & 4.126e-09 \\ \hline 
    \end{tabular}
\end{table}


\section{ETDRK methods with rescaling technique}\label{section:rescaling technique}
In the proof of Theorem \ref{theorem:energy decay main theorem} and the proof of original energy dissipation law in the literature \cite{FU-SHEN-YANG2024HigherOrder_ED}, the assumption of a Lipschitz condition on the function $f$ is required. However, for commonly used functions such as the Ginzburg--Landau potential, $f_{\text{GL}}(u) = u - u^3$, and the Flory-Huggins potential, $f_{\text{FH}}(u) = {\theta} (\ln {(1-u)}-\ln {(1+u)})/2 + \theta_c u$ , the derivatives are unbounded across the real number domain. 
In other words, the Lipschitz condition assumption does not hold if the MBP is not preserved. 
It is necessary and important to preserve MBP for high-order ETDRK schemes.
For the Cahn--Hilliard equation, to obtain the Lipschitz condition, a common practice is to modify the potential by truncating it for $|x| > M$ (for a sufficiently large $M$) and smoothly integrating it with a quadratic function smoothly
connected to the inner part \cite{shen_yang2010AC_CH}.


This section introduces a rescaling technique aimed at unconditionally preserving the MBP for arbitrarily high-order ETDRK schemes. 
The key of this technique is to rescale the interpolation polynomial. Note that the unboundedness of the interpolation polynomial is thought to cause the exceeding of maximum bound. To preserve the MBP, we slightly rescale the approximation polynomial $P_{r-1}$ in a manner that does not affect the order of the interpolation error. Replacing the interpolation polynomial with this rescaled version, we obtain a modified $r$th-order ETDRK scheme, which we refer to as the ETDRK method with rescaling technique. In subsection \ref{section:Convergence analysis}, we will show that the ETDRK method equipped with rescaling technique preserves the MBP unconditionally at arbitrary high orders while maintaining the same accuracy as the original ETDRK method.
By integrating ETDRK schemes with this rescaling technique, we can eliminate the necessity for the Lipschitz condition assumption of $f$ in Theorem \ref{theorem:energy decay main theorem}.

\subsection{Rescaling technique}
It has been proven in \cite{DU-JU-LI-QIAO2021ETD} that if the stabilizing constant $\kappa \ge \max _{|\xi| \le \beta}\left|f^{\prime}(\xi)\right|$, the ETDRK1 and ETDRK2 preserve the MBP.
Now, we can remove the Lipschitz assumption \cref{eq:Lipschitz} on $f$, and instead, we only require that
\begin{equation}\label{eq:kappa_cond2}
    \kappa \ge \max _{|\xi| \le \beta}\left|f^{\prime}(\xi)\right|.
\end{equation}
Consider the ETDRK$r$ scheme \cref{eq:ETDRKr}, and let us define a scaling factor
\begin{equation}\label{eq:scaling factor} 
    \alpha_{r-1}^n(x) \coloneqq \min \left\{\frac{\kappa \beta}{\max\limits_{s \in [0,\tau]} \left|P^{n}_{r-1}(x,s)\right|}, 1\right\}, 
\end{equation}
where $\beta$ is the bound of the MBP. 
We want to replace $P^{n}_{r-1}$ with $\tilde{P}^{n}_{r-1} \coloneq \alpha_{r-1}^n(x)  {P}^{n}_{r-1}$.
Then we explain that the adjustment does not change the accuracy of schemes. In the following analysis, we consider a fixed $x \in \Omega$.
If $\max\limits_{s \in [0,\tau]} \left|P_{r-1}^{n}\right| \le \kappa \beta$, then $\alpha_{r-1}^n(x) = 1$, i.e., $P^{n}_{r-1}(x,s)$ doesn't need to be changed.
If $\max\limits_{s \in [0,\tau]} \left|P_{r-1}^{n}\right| > \kappa \beta$, then for any $s \in [0,\tau]$,
\begin{equation}
    \begin{aligned}
        \left|{P}_{r-1}^{n}(x,s) - \tilde{P}_{r-1}^{n}(x,s)\right|
        &= \frac{\max\limits_{s \in [0,\tau]} \left|P_{r-1}^{n}\right| -\kappa \beta }{\max\limits_{s\in[0,\tau]} \left|P^{n}_{r-1}\right|} \left|{P}_{r-1}^{n}(x,s)\right| \\
        &\le \max\limits_{s \in [0,\tau]} \left|P_{r-1}^{n}\right| -\kappa \beta.
    \end{aligned}
\end{equation}
If we denote
$
    s_{*}(x)  \coloneq \argmax _{s \in [0,\tau]} \left|P_{r-1}^{n}(x,s)\right|,
$
then for any $s \in [0,\tau]$,
\begin{equation}\label{eq:tempc1}
    (1-\alpha_{r-1}^n(x))\left|{P}_{r-1}^{n}(x,s)\right| \le \left| P_{r-1}^{n}(x,s_{*})\right| -\kappa \beta .
\end{equation}
Since $\kappa \ge \max _{|\xi| \le \beta}\left|f^{\prime}(\xi)\right|$, it is easy to check that 
$0 \le \mathcal{N}^{\prime}(\xi) \le 2 \kappa,~  \forall \xi \in [-\beta, \beta].$
Thus, for any $\xi \in [-\beta, \beta]$, we know that
\begin{equation}
    -\kappa \beta \le f(-\beta)-\kappa \beta=\mathcal{N}(-\beta) \le \mathcal{N}(\xi) \le \mathcal{N}(\beta)=f(\beta)+\kappa \beta \le \kappa \beta.
\end{equation}
 Therefore, $\mathcal{N}(u(t_n+s_{*})) \in [-\kappa\beta, \kappa\beta]$ because the exact solution $u$ preserves the MBP.
Referring to  \cref{eq:tempc1}, we obtain
\begin{equation}\label{eq:same order}
    \begin{aligned}
        \left|{P}_{r-1}^{n}(x,s) - \tilde{P}_{r-1}^{n}(x,s)\right|
        &\le \left| P_{r-1}^{n}(x,s_{*})| - |\mathcal{N}(u(x,t_n+s_{*}))\right|\\
        &\leq \left|P_{r-1}^{n}(x,s_{*}) - \mathcal{N}(u(x,t_n+s_{*}))\right| 
         = O(\tau^{r}),
    \end{aligned}
\end{equation}
for any $s \in [0,\tau]$.
The last equality comes from interpolation error estimation. The error estimation in \cref{eq:same order} is crucial for ensuring that the rescaling technique does not lose accuracy, since it keeps the same order as the truncation error in \cite{DU-JU-LI-QIAO2021ETD}.

Replacing $P^{n}_{r-1}(x,s)$ with $\tilde{P}^{n}_{r-1}(x,s) = \alpha_{r-1}^n(x)  {P}^{n}_{r-1}(x,s)$ in \cref{eq:ETDRKr}, we derive the ETDRK$r$ scheme equipped with the rescaling technique, i.e.
for $n \geq 0$ and given $u^n$, find $u^{n+1}=w_r^{n}(\tau)$ by solving:
\begin{equation}\label{eq:ETDRKr rescaling}
    \left\{
    \begin{aligned}
       & \partial_s w_{r}^{n}  =\mathcal{L}_{\kappa} w_{r}^{n}+\tilde{P}^{n}_{r-1}(s), & &  {x} \in \Omega , ~ s \in(0, \tau], \\
        & w_r^{n}(0, {x})        =u^n({x}),                                              & & {x} \in \bar{\Omega},
    \end{aligned}
    \right.
\end{equation}
equipped with homogeneous Neumann boundary condition, where $u^n$ is the numerical solution approximating $u\left(t_n\right)$, and $u^0$ is given in \cref{eq:AC}. 

We now introduce a useful lemma concerning the contraction semigroup, which plays a crucial role in preserving the MBP. For the Laplace operator $\Delta$ with homogeneous Neumann boundary condition, 
and the maximum norm   
as
$
    \left\|u\right\|_{\infty} \coloneq \max_{x \in \bar{\Omega}} \left|u(x)\right| $ for any  $ u \in C(\bar{\Omega})
$,
we have the following lemma.


\begin{lemma}[\cite{DU-JU-LI-QIAO2021ETD}]\label{lem:contraction semi-group} 
 The Laplace operator $\Delta$ with the periodic or homogeneous Neumann boundary condition generates a contraction semigroup $\left\{S_{\Delta}(t)=e^{t \Delta}\right\}_{t \geq 0}$ with respect to the maximum norm on $C(\bar{\Omega})$. Moreover, for a positive real number $\gamma$, there holds
$$
\left\|e^{t(\Delta-\gamma)} u\right\|_{\infty} \leq e^{-\gamma t}\left\|u\right\|_{\infty}, \quad \forall t \geq 0, u \in C(\bar{\Omega}).
$$
\end{lemma}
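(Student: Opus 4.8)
The plan is to reduce the statement to the classical parabolic maximum principle for the heat equation, and then to dispatch the zeroth-order shift by a one-line operator factorization; since the lemma is quoted from \cite{DU-JU-LI-QIAO2021ETD}, I would keep the exposition to a sketch and refer there for the technical details.

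First I would set up the heat semigroup. For $u \in C(\bar\Omega)$, let $v(x,t) := (S_\Delta(t)u)(x)$ be the solution of $\partial_t v = \Delta v$ on $\Omega\times(0,\infty)$ with the periodic (resp.\ homogeneous Neumann) boundary condition and $v(\cdot,0)=u$; for $t>0$ it is smooth in $x$ by parabolic smoothing, and the semigroup property, strong continuity, and contractivity on $L^2(\Omega)$ follow from the spectral decomposition in the eigenbasis of the Neumann Laplacian (which also yields the explicit heat-kernel representation). The sup-norm contraction $\|S_\Delta(t)u\|_\infty \le \|u\|_\infty$ then comes from the maximum principle: put $M=\|u\|_\infty$, and apply the parabolic weak maximum principle to $v-M$ and to $-v-M$ on $\bar\Omega\times[\epsilon,T]$. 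In the periodic case there is no spatial boundary, so the maximum over $\bar\Omega\times[\epsilon,T]$ is attained at $t=\epsilon$; in the Neumann case a spatial-boundary maximum with value $>0$ is excluded by Hopf's boundary point lemma, since $\partial_\nu v = 0$ there. Letting $\epsilon\to 0^+$ and using continuity up to $t=0$ gives $-M\le v(x,t)\le M$ for all $(x,t)$, i.e.\ $\|S_\Delta(t)u\|_\infty\le\|u\|_\infty$.

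Second, for the shifted operator I would use that $\gamma\mathcal I$ commutes with $\Delta$, so as bounded operators on $C(\bar\Omega)$,
\[
  e^{t(\Delta-\gamma)} = e^{-\gamma t}\,e^{t\Delta} = e^{-\gamma t}\,S_\Delta(t),
\]
whence
\[
  \bigl\|e^{t(\Delta-\gamma)}u\bigr\|_\infty = e^{-\gamma t}\,\bigl\|S_\Delta(t)u\bigr\|_\infty \le e^{-\gamma t}\,\|u\|_\infty ,
\]
which is precisely the asserted estimate.

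The only real obstacle is technical rather than conceptual: justifying the maximum principle for merely continuous initial data and on a domain with limited boundary regularity. This is handled exactly as indicated above — work on $[\epsilon,T]$ where $v$ is smooth, invoke Hopf's lemma under the standing smoothness hypotheses on $\partial\Omega$ (or bypass the boundary entirely in the periodic case), and pass to the limit $\epsilon\to 0$. As this argument is carried out in full in \cite{DU-JU-LI-QIAO2021ETD}, I would cite it for the contraction property and only spell out the short $e^{-\gamma t}$ factorization here.
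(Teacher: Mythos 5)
The paper does not actually prove this lemma: it is imported verbatim from \cite{DU-JU-LI-QIAO2021ETD}, and the text immediately after the statement defers all details to that reference. So there is no in-paper argument to match; what you have written is a self-contained sketch where the paper has only a citation. Your route --- parabolic weak maximum principle plus Hopf's boundary point lemma for the Neumann case, followed by the commuting-generator factorization $e^{t(\Delta-\gamma)}=e^{-\gamma t}e^{t\Delta}$ --- is correct and standard; the second step in particular is exactly the right one-line reduction. For comparison, the cited source obtains the sup-norm contraction by semigroup machinery (dissipativity of $\Delta$ in the maximum norm via Lumer--Phillips, equivalently positivity and normalization of the Neumann/periodic heat kernel, $G\ge 0$ and $\int_\Omega G(x,y,t)\,\mathrm{d}y=1$), which buys the estimate without invoking Hopf's lemma or any boundary regularity beyond what is needed to define the semigroup, and avoids the $\epsilon\to0^+$ limiting argument entirely. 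One small circularity in your sketch is worth tightening: for merely continuous $u$ you invoke ``continuity up to $t=0$'' in the sup norm, which is the strong continuity of $S_\Delta(t)$ on $C(\bar\Omega)$ and is itself part of what is being established (strong continuity on $L^2$ from the spectral decomposition does not directly give it). The clean fix is to run your maximum-principle argument first for smooth initial data, where continuity at $t=0$ is immediate, and then extend to all of $C(\bar\Omega)$ by density using the contraction bound just proved. With that adjustment your proof is complete and is a legitimate alternative to the cited one.
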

The proof details of this lemma can be found in \cite{DU-JU-LI-QIAO2021ETD}. The following theorem proves that the ETDRK method with the rescaling technique unconditionally preserves the MBP.

\begin{theorem}\label{theorem:MBP}
Suppose that \cref{eq:f(beta)<0,eq:kappa_cond2} hold. If the initial value $u^0$ of the Allen--Cahn equation \cref{eq:AC} satisfies $\left\|u^{0}\right\|_{\infty} \le \beta$, then the ETDRK$r$ method equipped with the rescaling technique \cref{eq:ETDRKr rescaling} preserves the MBP unconditionally, i.e. for any time-step size $\tau>0$,
    the solution of ETDRK$r$ schemes with the rescaling technique \cref{eq:ETDRKr rescaling} satisfies $ \left\|u^n\right\|_{\infty} \le \beta$. 
\end{theorem}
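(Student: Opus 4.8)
The plan is to argue by induction on $n$, with the case $n=0$ being the hypothesis $\|u^0\|_\infty\le\beta$. For the inductive step I would assume $\|u^n\|_\infty\le\beta$ and apply Duhamel's principle to the rescaled scheme \cref{eq:ETDRKr rescaling} to obtain
\[
u^{n+1}=w_r^n(\tau)=\mathrm e^{\tau\mathcal L_\kappa}u^n+\int_0^\tau \mathrm e^{(\tau-s)\mathcal L_\kappa}\tilde P^{n}_{r-1}(s)\,\mathrm ds,
\]
and then estimate the two terms separately in the maximum norm.

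The first ingredient is a uniform bound on the rescaled polynomial. Directly from the definition \cref{eq:scaling factor} of the scaling factor, for every $x\in\Omega$ and every $s\in[0,\tau]$,
\[
\bigl|\tilde P^{n}_{r-1}(x,s)\bigr|=\alpha_{r-1}^{n}(x)\bigl|P^{n}_{r-1}(x,s)\bigr|\le \alpha_{r-1}^{n}(x)\max_{\sigma\in[0,\tau]}\bigl|P^{n}_{r-1}(x,\sigma)\bigr|\le\kappa\beta,
\]
so that $\|\tilde P^{n}_{r-1}(\cdot,s)\|_\infty\le\kappa\beta$ for all $s\in[0,\tau]$. The point is that this holds regardless of how the (possibly unbounded) interpolant $P^{n}_{r-1}$ — and hence the intermediate ETDRK stages — behaves; the $\min$ in \cref{eq:scaling factor} alone enforces it. This is exactly the mechanism by which the rescaling yields an \emph{unconditional} estimate.

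The second ingredient is \cref{lem:contraction semi-group}. Writing $\mathcal L_\kappa=\varepsilon^2\Delta-\kappa\mathcal I=\varepsilon^2\bigl(\Delta-(\kappa/\varepsilon^2)\mathcal I\bigr)$, we have $\mathrm e^{t\mathcal L_\kappa}=\mathrm e^{(\varepsilon^2 t)(\Delta-(\kappa/\varepsilon^2)\mathcal I)}$, so the lemma (applied with $\gamma=\kappa/\varepsilon^2$ and time $\varepsilon^2 t$) gives $\|\mathrm e^{t\mathcal L_\kappa}v\|_\infty\le \mathrm e^{-\kappa t}\|v\|_\infty$ for all $t\ge0$ and $v\in C(\bar\Omega)$. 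Combining this with the inductive hypothesis and the bound on $\tilde P^{n}_{r-1}$,
\[
\|u^{n+1}\|_\infty\le \mathrm e^{-\kappa\tau}\|u^n\|_\infty+\int_0^\tau \mathrm e^{-\kappa(\tau-s)}\|\tilde P^{n}_{r-1}(s)\|_\infty\,\mathrm ds\le \mathrm e^{-\kappa\tau}\beta+\kappa\beta\int_0^\tau \mathrm e^{-\kappa(\tau-s)}\,\mathrm ds=\beta,
\]
since $\kappa\int_0^\tau \mathrm e^{-\kappa(\tau-s)}\,\mathrm ds=1-\mathrm e^{-\kappa\tau}$ for every $\tau>0$. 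This closes the induction, and in particular no step-size restriction is incurred.

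I do not expect a serious obstacle: the estimate is a single-step computation that never touches the energy. The only places needing a little care are the rescaling of the semigroup contraction bound from $\Delta-\gamma$ to $\varepsilon^2\Delta-\kappa$, and making explicit that the uniform bound $\|\tilde P^{n}_{r-1}(\cdot,s)\|_\infty\le\kappa\beta$ is a consequence of the clipping in \cref{eq:scaling factor} alone, so that the argument does not rely on any (unavailable) MBP property of the intermediate stages $w_1^n,\dots,w_{r-1}^n$. If desired, the same computation applied verbatim shows that each intermediate stage also remains bounded by $\beta$, but this is not needed for the statement.
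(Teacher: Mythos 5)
Your proposal is correct and follows essentially the same route as the paper's proof: bound $\|\tilde P^{n}_{r-1}(\cdot,s)\|_\infty\le\kappa\beta$ directly from the definition of the scaling factor, then apply Duhamel's principle and the contraction-semigroup estimate $\|\mathrm e^{t\mathcal L_\kappa}v\|_\infty\le\mathrm e^{-\kappa t}\|v\|_\infty$ to close the induction with the identity $\mathrm e^{-\kappa\tau}\beta+\kappa\beta\int_0^\tau\mathrm e^{-\kappa(\tau-s)}\,\mathrm ds=\beta$. Your one-line derivation of the $\kappa\beta$ bound (via the $\min$ in \cref{eq:scaling factor}) and your explicit rescaling of Lemma \ref{lem:contraction semi-group} to the operator $\varepsilon^2\Delta-\kappa\mathcal I$ are just slightly more streamlined versions of the paper's two-case argument.
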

\begin{proof}
    To establish the MBP, it suffices to prove $\left\|u^{n+1}\right\|_{\infty} \le \beta$ if $\left\|u^n\right\|_{\infty} \le \beta$. 
    For a fixed $x\in\Omega$, if $\max\limits_{s \in [0,\tau]} \left|P_{r-1}(x,s)\right| > \kappa\beta$, then we have 
    \begin{equation} 
        \begin{aligned}
            \left|\tilde{P}^{n}_{r-1}(x,s) \right| 
            =  \alpha_{r-1}^n(x) | {P}_{r-1}(x,s)|
            =  \frac{\kappa \beta}{\max\limits_{s \in [0,\tau]} \left|P_{r-1}\right|} | {P}_{r-1}|
            \le \kappa \beta,
        \end{aligned}
    \end{equation}
    for any $s \in [0,\tau]$. 
    Conversely, if $\max\limits_{s \in [0,\tau]} \left|P_{r-1}(x,s)\right| \le \kappa\beta$, then $\alpha_{r-1}^n(x)=1$ and
    $\tilde{P}^{n}_{r-1}= {P}^{n}_{r-1}$. Regardless, we establish that
    \begin{equation} \label{eq:<=kappa beta}
        \begin{aligned}
            \left|\tilde{P}^{n}_{r-1}(x,s) \right| \le \kappa \beta, \quad \forall (x,s) \in \Omega \times [0,\tau].
        \end{aligned}
    \end{equation}
By Duhamel's principle, we know that the explicit solution of \cref{eq:ETDRKr rescaling} is
\begin{equation}
w^{n}_{r}(s)=\mathrm{e}^{s \mathcal{L}_\kappa} u^n+  \int_0^s \mathrm{e}^{(s-\sigma) \mathcal{L}_\kappa} \tilde{P}^{n}_{r-1}(\sigma) \mathrm{~d}  \sigma .
\end{equation}
Using Lemma \ref{lem:contraction semi-group}, along with \cref{eq:<=kappa beta} and  $\left\|u^n\right\|_{\infty} \le \beta$, for any $s \in [0,\tau]$, we have
    \begin{align}\label{aeq_t1}
    \left\|w^{n}_{r}(s)\right\|_{\infty} 
    \notag & \le \mathrm{e}^{-\kappa s}\left\|u^n\right\|_{\infty}
    +\int_0^s\mathrm{e}^{-\kappa(s-\sigma)} \left\| \tilde{P}^{n}_{r-1}(\sigma) \right\|_{\infty} \mathrm{~d} \sigma \\
    & \le \beta \mathrm{e}^{-\kappa s} +\int_0^s  \mathrm{e}^{-\kappa(s-\sigma)} \kappa \beta \mathrm{~d}  s \\
    \notag & = \beta \mathrm{e}^{-\kappa s}+\kappa \beta \frac{1-\mathrm{e}^{-\kappa s}}{\kappa}
    =\beta.
    \end{align}
Since $u^{n+1}=w^{n}_{r}(\tau)$, \cref{aeq_t1} shows the ETDRK methods equipped with the rescaling technique are unconditionally MBP-preserving.
\end{proof}
\begin{remark}
Since $\kappa \ge \max _{|\xi| \le \beta}\left|f^{\prime}(\xi)\right|$, the function $\mathcal{N}(u(t_{n}+s))$ lies within the interval $[-\kappa \beta, \kappa \beta]$. However, the interpolation polynomial $P_{r-1}(s)$ does not necessarily maintain its values within $[-\kappa \beta, \kappa \beta]$. This is the primary reason the ETDRK numerical method fails to preserve the MBP. 
There are only two instances where $|P_{r-1}|$ is guaranteed to be bounded by $\kappa\beta$. In the first instance, for $r=1$, the polynomial $P_0 = \mathcal{N}(u(t_{n}))$, is constant and remains within the interval $[-\kappa\beta, \kappa\beta]$. In the second instance, for $r=2$ with nodes ${a_{1,0}=0, a_{1,1}=1}$, the polynomial $P_1(s) = \mathcal{N}(u(t_{n})) + s \left( {\mathcal{N}(u(t_{n+1})) - \mathcal{N}(u(t_n))} \right)/{\tau}$ also stays within this interval. The two instances correspond to ETDRK1 and ETDRK2 schemes, respectively. Both schemes have been proven to preserve the MBP unconditionally in \cite{DU-JU-LI-QIAO2021ETD} and the original energy dissipation law unconditionally in \cite{FU-Yang2022ETDRK2Energy}. 
Apart from these two specific cases, there is no guarantee that the interpolation polynomials will remain within $\left[-\kappa\beta, \kappa\beta\right]$. Therefore, to preserve the MBP, it is necessary to implement some strategies to ensure that the interpolation polynomials are adjusted to stay within the range.
\end{remark}
\begin{remark}
    Another common MBP-preserving strategy for high-order schemes is the cut-off technique, see for example \cite{LI-YANG-ZHOU2020cutoff,YANG-YUAN-ZHOU2022cutoff}. As depicted in Figure \ref{fig:compare_cut_scale}, applying the cut-off technique to the interpolation polynomial $P(s)$ results in a bounded function $\tilde{P}(s)$, which is no longer a polynomial. Consequently, the Integral of $\mathrm{e}^{(\tau-s) \mathcal{L}_\kappa} \tilde{P}(s)$ cannot be expressed with an exact formula. To address this, we introduce the rescaling technique, ensuring that $\tilde{P}(s)$ remains a polynomial. The current cut-off technique is typically applied to the unknown function $u(s)$ rather than to the interpolation function $P(s)$, making the analysis of the energy dissipation law difficult. For more detailed explorations of the cut-off technique, one can refer to \cite{LI-YANG-ZHOU2020cutoff,YANG-YUAN-ZHOU2022cutoff}.
\end{remark}
\begin{figure}[!ht]
    \centerline{
    \hspace{-0.2cm}
    \includegraphics[width=0.4\textwidth]{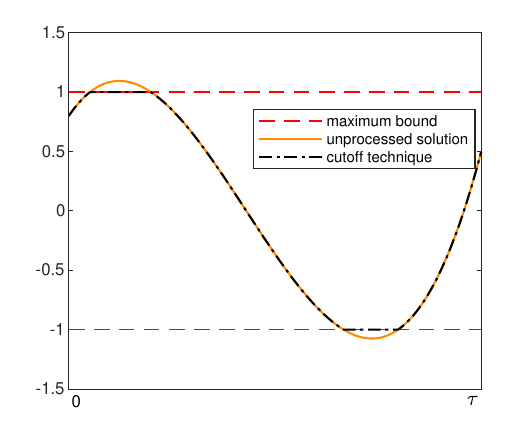}\hspace{-0.2cm}
    \includegraphics[width=0.4\textwidth]{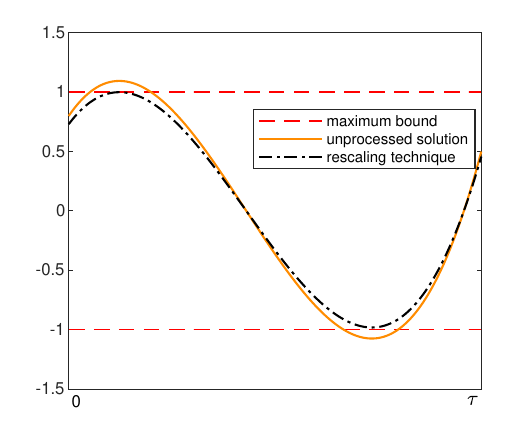}}
    \vspace{-0.2cm}
    \caption{Comparison of the cut-off (left) and rescaling (right) techniques.}
    \label{fig:compare_cut_scale}
\end{figure}
\begin{remark}
    Since the ETDRK method is a single-step method, employing the rescaling technique ensures preserving the MBP for any time mesh. However, when dealing with multi-step methods especially with non-uniform time mesh, preserving MBP unconditionally remains a significant and challenging issue. Some progress have been made in addressing this issue, particularly concerning the backward differentiation formula (BDF)  \cite{chen2019BDF,LIAO-TANG-ZHOU2020BDF,liao2022BDF2,akrivis2024variable}.
\end{remark}

\subsection{Energy dissipation of ETDRK methods with rescaling technique}
For ETDRK schemes employing the rescaling technique, 
we show that it retain the property of original energy decay as stated in Theorem \ref{theorem:energy decay main theorem}. The conclusions remain unchanged, with only small adjustments needed in the proof. In other words, using the rescaling technique, we can eliminate the requirement of the Lipschitz condition for $f$ in Theorem \ref{theorem:energy decay main theorem}.
\begin{theorem}\label{theorem:energy decay rescaling}
Suppose that \cref{eq:f(beta)<0,eq:kappa_cond2} hold.
Then the ETDRK$r$ scheme with the rescaling technique preserves the original energy dissipation law within a certain time-step size restriction.
More precisely, for any integer $r\ge 1$, there exists a positive constant $\tau_{\max,r}$ independent of $\varepsilon$, such that
    the solution $\left\{u^{n}\right\}_{n\ge 0} $ to the ETDRK$r$ scheme with the rescaling technique satisfies
        \[
            E\left(u^{n+1}\right) - E\left(u^{n}\right) \coloneqq E\left(w^{n}_{r}(\tau)\right) - E\left(w^{n}_{r}(0)\right) \le 0 , 
        \]
     for all $\tau\le \tau_{\max,r}$. The time-step size restriction is  
\[
    \tau_{\max,1} = + \infty, \quad 
    \tau_{\max,r} = \frac{1}{10\kappa} \min \left\{\sigma_{\min}\left(V_1\right),\frac{\sigma_{\min}\left(V_2\right)}{2}, \ldots ,\frac{\sigma_{\min}\left(V_{r-1}\right)}{r-1}\right\}, \quad r\ge 2, 
\]
     where $\sigma_{\min}(A)$ represents the minimum singular value of a matrix $A$, and $V_{k}$ is the $k \times k$ Vandermonde matrix of interpolation nodes defined in \cref{eq:vandermonde V_r}. 
\end{theorem}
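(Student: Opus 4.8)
The plan is to re-run the proof of \Cref{theorem:energy decay main theorem} essentially verbatim, tracking the two places where the rescaling modification enters and absorbing the resulting extra constants into a slightly smaller $\tau_{\max,r}$. The structure is again two parts: (i) derive a sufficient condition for $E(u^{n+1})-E(u^n)\le 0$ of the form $\frac{3}{2}\tau\sqrt{r-1}\,\||\boldsymbol c^n_{r-1}|\|\le \|w_1^n(\tau)-u^n\|$ with \emph{the same} coefficient $3/2$, since that estimate used only the Cauchy inequality, \Cref{lemma:psi_k bound}, and the elementary convexity inequality \cref{eq:int_N_1}; (ii) run the induction on $r$ to find $\tau_{\max,r}$. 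The difference in Part~I is that the differential form is now $\partial_s w_r^n=\mathcal L_\kappa w_r^n+\tilde P^n_{r-1}(s)$, so in the chain \cref{eq:E_2-E_1 process} the term $\int_0^\tau(P^n_{r-1}(s)-\mathcal N(u^n))\partial_s w_r^n\,\mathrm ds$ becomes $\int_0^\tau(\tilde P^n_{r-1}(s)-\mathcal N(u^n))\partial_s w_r^n\,\mathrm ds$. I would bound $\|\tilde P^n_{r-1}(s)-\mathcal N(u^n)\|\le \|\tilde P^n_{r-1}(s)-P^n_{r-1}(s)\|+\|P^n_{r-1}(s)-\mathcal N(u^n)\|$; the second piece is $\|\sum_{k=1}^{r-1}(s/\tau)^k c^n_{r-1,k}\|$ as before, and the first piece is $(1-\alpha^n_{r-1}(x))|P^n_{r-1}(x,s)|$, which by \cref{eq:tempc1} and the MBP argument (\Cref{theorem:MBP}) is pointwise bounded by $|P^n_{r-1}(x,s_*)|-\kappa\beta\le |P^n_{r-1}(x,s_*)-\mathcal N(u(x,t_n))|$; and since $\mathcal N(u(x,t_n))=\mathcal N(u^n)$ is exactly the constant term of $P^n_{r-1}$, this equals $|\sum_{k=1}^{r-1}(s_*/\tau)^k c^n_{r-1,k}|\le \sum_{k=1}^{r-1}|c^n_{r-1,k}|\le\sqrt{r-1}\,|\boldsymbol c^n_{r-1}|$. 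So the extra term contributes at most another $\tau\sqrt{r-1}\,\||\boldsymbol c^n_{r-1}|\|$ after integrating in $s$, doubling the relevant coefficient in the sufficient condition.

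Carrying this through \cref{eq:E_2-E_1 process} (the $-\tfrac12\int(\partial_s w_r^n)^2 + \tfrac12\int(\tilde P^n_{r-1}-\mathcal N(u^n))^2$ step, then Cauchy--Schwarz in $s$), I get $E(u^{n+1})-E(u^n)\le \frac{1}{2\tau}\big[\tau^2\cdot C\cdot\||\boldsymbol c^n_{r-1}|\|^2-\|u^{n+1}-u^n\|^2\big]$ for some absolute multiple $C$ of $(r-1)$ — concretely, replacing $P_{r-1}-\mathcal N(u^n)$ by $\tilde P_{r-1}-\mathcal N(u^n)$ roughly doubles the middle term, so the sufficient condition \cref{eq:aim_1} becomes something like $\frac{5}{2}\tau\sqrt{r-1}\,\||\boldsymbol c^n_{r-1}|\|\le \|w_1^n(\tau)-u^n\|$ after also accounting for $\|u^{n+1}-w_1^n(\tau)\|\le \tfrac12\tau\sqrt{r-1}\,\||\boldsymbol c^n_{r-1}|\|$ (which is unchanged, being \cref{eq:norm ur-u1}, though one should double-check $u^{n+1}-w_1^n(\tau)$ in the rescaled scheme still has the representation \cref{eq:ur-u1} — it does, with $c^n_{r-1,k}$ now the coefficients of $\tilde P^n_{r-1}$, i.e.\ $\alpha^n_{r-1}$ times the old ones, which only helps). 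The bookkeeping of these constants is what turns the $4\kappa$ in $\tau_{\max,r}$ of \Cref{theorem:energy decay main theorem} into the $10\kappa$ claimed here.

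For Part~II, the induction is identical in shape. The base case $r=1$ gives $\tau_{\max,1}=+\infty$ since the left-hand side vanishes (for ETDRK1 there is no polynomial to rescale). For the inductive step, I use $\boldsymbol c^n_r=V_r^{-1}\boldsymbol d^n_r$ so $|\boldsymbol c^n_r|\le \sigma_{\min}(V_r)^{-1}|\boldsymbol d^n_r|$, then $\||\boldsymbol d^n_r|\|\le 2\kappa(\sum_k\|w_r^n(a_{r,k}\tau)-u^n\|^2)^{1/2}$ from $|\mathcal N(u)-\mathcal N(v)|\le 2\kappa|u-v|$ — here I need $w_r^n$ from the \emph{rescaled} scheme, but that is fine since $w_r^n(a_{r,k}\tau)-u^n=w_1^n(a_{r,k}\tau)-u^n+\tau\sum_{j=1}^{r-1}j!(a_{r,k})^{j+1}\phi_{j+1}(a_{r,k}\tau\mathcal L_\kappa)\,\tilde c^n_{r-1,j}$, and $|\tilde c^n_{r-1,j}|=\alpha^n_{r-1}|c^n_{r-1,j}|\le |c^n_{r-1,j}|$, so the estimate \cref{eq:noname_3} goes through with the same or better constants; the same is true of \cref{eq:noname_5} via \Cref{lemma:psi_k bound}. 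Chaining with the inductive hypothesis $\||\boldsymbol c^n_{r-1}|\|\le \tfrac{2}{5\tau\sqrt{r-1}}\|w_1^n(\tau)-u^n\|$ (the rescaled analogue of \cref{eq:induc_hypothesis}) collapses the right-hand side to a multiple of $\|w_1^n(\tau)-u^n\|$, and requiring that multiple (of the form $\frac{C r\kappa\tau}{\sigma_{\min}(V_r)}$) to be $\le 1$ yields $\tau_{\max,r+1}=\min\{\tau_{\max,r},\,\sigma_{\min}(V_r)/(10r\kappa)\}$, hence the stated formula. The main obstacle — really the only non-cosmetic point — is controlling the new rescaling defect $\|\tilde P^n_{r-1}-P^n_{r-1}\|$ in Part~I without picking up a factor that depends on $r$ worse than $\sqrt{r-1}$ or that depends on $\varepsilon$; the key observation that makes it work is that $(1-\alpha^n_{r-1}(x))|P^n_{r-1}(x,s)|$ is controlled at the \emph{single} point $s_*$ by $|P^n_{r-1}(x,s_*)-\mathcal N(u^n)|$, which is exactly a polynomial with no constant term in $c^n_{r-1,k}$, so it carries the same $\sqrt{r-1}\,|\boldsymbol c^n_{r-1}|$ bound as the genuine interpolation-remainder term and merely doubles a constant. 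One should also note that $\tau_{\max,r}$ remains independent of $\varepsilon$ because every use of $\phi_{k}$ is through the $\varepsilon$-independent bound $\|\phi_k(t\mathcal L_\kappa)v\|\le \tfrac1{k!}\|v\|$ of \Cref{lemma:psi_k bound}, and $\mathcal N(u(t_n+s_*))\in[-\kappa\beta,\kappa\beta]$ uses only \cref{eq:kappa_cond2} and the continuous MBP \cref{eq:MBP_continuous}.
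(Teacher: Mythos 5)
Your overall strategy coincides with the paper's: same Part I/Part II structure, same $(a-b)^2\le 2a^2+2b^2$ splitting of $\tilde P^n_{r-1}-\mathcal N(u^n)$, and the same key observation that $(1-\alpha^n_{r-1}(x))\lvert P^n_{r-1}(x,s)\rvert$ is controlled by $\lvert P^n_{r-1}(x,s_*)-\mathcal N(u^n(x))\rvert\le\sqrt{r-1}\,\lvert\boldsymbol c^n_{r-1}\rvert$. But there is one genuine error, and it is exactly where the theorem's constant $10\kappa$ comes from. You assert that $u^{n+1}-w_1^n(\tau)$ ``still has the representation \cref{eq:ur-u1} \dots with $c^n_{r-1,k}$ now $\alpha^n_{r-1}$ times the old ones, which only helps.'' This is false: the rescaling multiplies the \emph{entire} polynomial by $\alpha^n_{r-1}$, including its constant term, so $\tilde P^n_{r-1}(x,0)=\alpha^n_{r-1}(x)\mathcal N(u^n)\neq\mathcal N(u^n)$, while $w_1^n$ is still driven by the unrescaled constant $\mathcal N(u^n)$. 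Hence
\[
u^{n+1}-w_1^n(\tau)=\tau\phi_1(\tau\mathcal L_\kappa)\bigl[(\alpha^n_{r-1}-1)\mathcal N(u^n)\bigr]+\tau\sum_{k=1}^{r-1}k!\,\phi_{k+1}(\tau\mathcal L_\kappa)\bigl[\alpha^n_{r-1}c^n_{r-1,k}\bigr],
\]
and the first term does not vanish. The paper bounds it by $\tau\sqrt{r-1}\,\bigl\|\lvert\boldsymbol c^n_{r-1}\rvert\bigr\|$ (its \cref{eq:tempp3}, i.e.\ the $s_*$ argument applied at $s=0$), which turns your $\tfrac12\tau\sqrt{r-1}\,\bigl\|\lvert\boldsymbol c^n_{r-1}\rvert\bigr\|$ into $\tfrac32\tau\sqrt{r-1}\,\bigl\|\lvert\boldsymbol c^n_{r-1}\rvert\bigr\|$, the sufficient condition into $\tfrac72\tau\sqrt{r-1}\,\bigl\|\lvert\boldsymbol c^n_{r-1}\rvert\bigr\|\le\|w_1^n(\tau)-u^n\|$ rather than your $\tfrac52$, and the inductive step into $10r\kappa\tau/\sigma_{\min}(V_r)\le 1$ rather than the $6r\kappa\tau/\sigma_{\min}(V_r)\le 1$ your constants would yield.

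The same omission recurs in Part II: $w_r^n(a_{r,k}\tau)-u^n$ also carries the extra term $\tau\phi_1(a_{r,k}\tau\mathcal L_\kappa)[(\alpha^n_{r-1}-1)\mathcal N(u^n)]$ (see \cref{tempeq1}), so the analogue of \cref{eq:noname_3} does \emph{not} go through ``with the same or better constants''; the inner coefficient becomes $\tfrac32$ instead of $\tfrac12$. The repair is entirely within tools you already invoke --- apply your own pointwise bound $(1-\alpha^n_{r-1}(x))\lvert P^n_{r-1}(x,s)\rvert\le\lvert P^n_{r-1}(x,s_*)-\mathcal N(u^n)\rvert$ at $s=0$ and use \cref{lemma:psi_k bound} on $\phi_1$ --- but as written your derivation both misdescribes the structure of the rescaled scheme and does not produce the stated $\tau_{\max,r}=\frac{1}{10\kappa}\min\{\sigma_{\min}(V_1),\dots,\sigma_{\min}(V_{r-1})/(r-1)\}$.
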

\begin{proof}
The proof is quite similar to the proof of Theorem \ref{theorem:energy decay main theorem}. 
Following with the analysis framework in \cref{eq:E_2-E_1 process}, we have
\begin{align}\label{eq:ED_rescaling}
   \notag & E\left( u^{n+1}\right) -E\left( u^{n}\right) \\
   \notag &=\left(\int _{0}^{\tau }( -\mathcal{L}_{\kappa } w^{n}_{r} -\mathcal{N}( w^{n}_{r}))  \partial_s w_{r}^{n} \mathrm{~d} s,1\right) \\
 \notag   &\le \left( - \int _{0}^{\tau}\left(\partial_s  w _{r}^{n}\right)^{2} \mathrm{~d} s + \int _{0}^{\tau}\left(\alpha_{r-1}^{n}P_{r-1}^{n} (s) -\mathcal{N}(u^{n}) \right)\partial_s  w _{r}^{n} \mathrm{~d} s ,1\right)\\
 \notag   &\le \left( -\frac{1}{2} \int _{0}^{\tau}\left(\partial_s  w _{r}^{n}\right)^{2} \mathrm{~d} s +\frac{1}{2}\int _{0}^{\tau}\left(\alpha_{r-1}^{n}P_{r-1}^{n} (s) -\mathcal{N}(u^{n}) \right)^{2} \mathrm{~d} s ,1\right)\\
    &\le - \frac{1}{2\tau} \left\|u^{n+1}-u^{n}\right\|^{2} + \frac{1}{2} \left(\int_{0}^{\tau} \left( \alpha_{r-1}^{n}P_{r-1}^{n} (s)-\mathcal{N}( u^{n})\right)^{2} \mathrm{~d} s,1\right) \\
  \notag  & =  \frac{1}{2}\left( \int_{0}^{\tau} \left[\left(P_{r-1}^{n} -\mathcal{N}( u^{n})\right) - (1-\alpha_{r-1}^{n})P_{r-1}^{n}\right] ^{2}     \mathrm{~d} s, 1\right)  - \frac{1}{2\tau}\left\|u^{n+1}-u^{n}\right\|^{2}\\
 \notag   & \le  \left( \int_{0}^{\tau} \left[P_{r-1}^{n} -\mathcal{N}( u^{n})\right]^{2} + \left[(1-\alpha_{r-1}^{n})P_{r-1}^{n}\right]^{2}     \mathrm{~d} s, 1\right)  - \frac{1}{2\tau} \left\|u^{n+1}-u^{n}\right\|^{2}\\
  \notag  & \le \tau(r-1) \left\|\left|\boldsymbol{c}^{n}_{r-1}\right| \right\|  + \left( \int_{0}^{\tau}\left[(1-\alpha_{r-1}^{n})P_{r-1}^{n}\right]^{2}     \mathrm{~d} s, 1\right)  - \frac{1}{2\tau} \left\|u^{n+1}-u^{n}\right\|^{2}.
\end{align}
Since $\mathcal{N}(u^n) \in [-\kappa\beta,\kappa\beta]$, and similar with \cref{eq:same order}, for any $(x,s) \in \Omega \times [0,\tau]$, we have
\begin{equation}\label{ineq:aP}
(1-\alpha_{r-1}^n(x))\left|{P}_{r-1}^{n}(x,s)\right| 
\le \left| P_{r}^{n}(x,s_{*}(x)) - \mathcal{N}(u^n(x))\right|,
\end{equation}
where $s_{*}(x) = \argmax _{s \in [0,\tau]} \left|P_{r}^{n}(x,s)\right|$.
Therefore, we obtain
\begin{align} \label{eq:alpha_bound}
   \notag \left( \int_{0}^{\tau} \left[(1-\alpha_{r-1}^{n})P_{r-1}^{n}\right]^{2}     \mathrm{~d} s, 1\right) &\le 
    \left( \int_{0}^{\tau} \left( P_{r-1}^{n}(x,s_{*}(x)) - \mathcal{N}(u^n(x))\right)^{2}     \mathrm{~d} s, 1\right) \\
    &  = \tau  \left(\left( \sum _{k=1}^{r-1} \left(\frac{s_*}{\tau}\right)^{k} c_{r-1,k}^{n} \right)^{2},1\right) \\
   \notag &\le \tau(r-1) \left\|\left|\boldsymbol{c}^{n}_{r-1}\right| \right\|^2.
\end{align}
Substituting \cref{eq:alpha_bound} into \cref{eq:ED_rescaling}, we obtain
\begin{equation}\label{eq:ED_rescaling_2}
    \begin{aligned}
     E\left( u^{n+1}\right) -E\left( u^{n}\right) 
     \le\frac{1}{2\tau} \left[4\tau^2(r-1) \left\|\left|\boldsymbol{c}^{n}_{r-1}\right| \right\|^2  -  \left\|u^{n+1}-u^{n}\right\|^{2}\right] .
\end{aligned}
\end{equation}
Compared with \cref{eq:Aim}, only the coefficient of term $\left\|\left|\boldsymbol{c}^{n}_{r-1}\right| \right\|$ is multiplied by 4.
Similarly, we can infer that if the following inequality
\begin{equation}\label{eq:aim_0_rescale}
  2\tau\sqrt{r-1} \left\|\left|{\boldsymbol{c}}^{n}_{r-1}\right|\right\| + \left\| u^{n+1}- w^{n}_{1}(\tau) \right\| \le \left\|w^{n}_{1}(\tau)-u^{n} \right\|
\end{equation}
holds, then $E\left(u^{n+1}\right) -E\left(u^{n}\right) \le 0$.

From the explicit formula of ETDRK$r$ with the rescaling technique, we have
\begin{equation}
    u^{n+1} - w_{1}^{n}(\tau)  = \tau\phi_1(\tau\mathcal{L} _{\kappa}) 
        \left[ (\alpha_{r-1}^{n}-1)\mathcal{N}(u^{n}) \right]
          + \tau \sum_{k=1}^{r-1}k!{\phi_{k+1}}(\tau\mathcal{L} _{\kappa}) \left[ \alpha_{r-1}^{n}c_{r-1,k}^n \right].
\end{equation}
Similar with \cref{eq:norm ur-u1}, we have
\begin{equation}\label{eq:tempp1}
    \begin{aligned}
        \left\|u^{n+1}-w_{1}^{n}(\tau)\right\| & \le  \tau \left\|\phi_1(\tau\mathcal{L} _{\kappa})\left[ (\alpha_{r-1}^{n}-1)\mathcal{N}(u^{n}) \right]\right\|  \\
        & \quad + \tau\left\|   \sum_{k=1}^{r-1}k!{\phi_{k+1}}(\tau\mathcal{L} _{\kappa})\left[ \alpha_{r-1}^{n}c_{r-1,k}^n \right]\right\|.
    \end{aligned}
\end{equation}
Similar to \cref{eq:noname_3}, we know that
\begin{equation}\label{eq:tempp2}
    \left\|   \sum_{k=1}^{r-1}k!{\phi_{k+1}}(\tau\mathcal{L} _{\kappa})\left[ \alpha_{r-1}^{n}c_{r-1,k}^n \right]\right\|
    \le \frac{1}{2} \sqrt{r-1} \left\|\left|\boldsymbol{c}^{n}_{r-1}\right|\right\|.
\end{equation}

Using \cref{ineq:aP} for $s=0$, we have
\begin{equation}\label{eq:tempp3}
    \begin{aligned}
        \left\|\phi_1(\tau\mathcal{L} _{\kappa})\left[(\alpha_{r-1}^{n}-1)\mathcal{N}(u^{n})\right]\right\|
        &\le   \left\|(\alpha_{r-1}^{n}-1)\mathcal{N}(u^{n})\right\|   \\
        &\le   \left\|P_{r}^{n}(x,s_{*}(x)) - \mathcal{N}(u^n(x))\right\|   \\
        &=  \left(\left( \sum _{k=1}^{r-1} \left(\frac{s_*}{\tau}\right)^{k} c_{r-1,k}^{n} \right)^{2},1\right)^{\frac{1}{2}}   \\
        &\le 
        \sqrt{r-1} \left\|\left|{\boldsymbol{c}}^{n}_{r-1}\right|\right\|.
    \end{aligned}
\end{equation}
Combining \cref{eq:tempp1,eq:tempp2,eq:tempp3}, we have
\begin{equation}\label{eq:norm ur-u1 rescaling}
        \left\|u^{n+1}-w_{1}^{n}(\tau)\right\| \le \frac{3}{2}\tau \sqrt{r-1} \left\|\left|{\boldsymbol{c}}^{n}_{r-1}\right|\right\|.
\end{equation}

Thus, similar to \cref{eq:aim_1}, we can infer that if the following inequality
\begin{equation}\label{eq:aim_1_rescale}
\frac{7}{2}  \tau\sqrt{r-1} \left\|\left|\boldsymbol{c}^{n}_{r-1}\right|\right\| \le \left\| w_{1}^{n}(\tau)-u^{n} \right\|
\end{equation}
holds, then $E\left(u^{n+1}\right) -E\left(u^{n}\right) \le 0$. The equation \cref{eq:aim_1_rescale} is the same as \cref{eq:aim_1} only with some change of coefficients, and the following proof is also similar with the proof of Theorem \ref{theorem:energy decay main theorem}. 

We aim to employ mathematical induction to demonstrate that, for any $r \ge 1$ there exists a specific constant, denoted as $\tau_{\max,r}$, such that the inequality \cref{eq:aim_1_rescale} holds true for all $0<\tau \le \tau_{\max,r}$.

Firstly, it is easy to check $\tau_{\max, 1} = + \infty$.
Then we suppose that for fixed $r\geq 1$, there exists certain constant $\tau_{\max,r}>0$, such that \cref{eq:aim_1_rescale} holds for any $\tau \le  \tau_{\max,r}$ and consider the case of $r+1$ based on the inductive hypothesis, i.e., we want to show that, there exists certain positive constant $\tau_{\max,r+1} \le  \tau_{\max,r}$, such that
\begin{equation} \label{eq:aim_r_rescaling}
    \frac{7}{2} \tau\sqrt{r} \left\|\left|\boldsymbol{c}^{n}_{r}\right|\right\| \le \left\| w_{1}^{n}(\tau)-u^{n}\right\|
\end{equation}
holds for any $\tau \le  \tau_{\max,r+1}$.
Following the proof in Theorem \ref{theorem:energy decay main theorem}, we have
\begin{equation} 
    \frac{7}{2}\tau\sqrt{r}\left\|\left|\boldsymbol{c}^{n}_{r}\right|\right\| \le 
    \frac{7\sqrt{r}\kappa\tau}{\sigma_{\min}\left(V_r\right)}   \left(\sum_{k=1}^{r}\left\|w^{n}_{r}(a_{r,k}\tau)-u^{n}\right\|^{2}\right)^{\frac{1}{2}}. 
\end{equation}
According to the explicit formula of ETDRK$r$ with the rescaling technique, for $k=1,2, \ldots ,r$, we have
\begin{equation}  \label{tempeq1}
\begin{aligned}
    w^{n}_{r}(a_{r,k}\tau)-u^{n} 
    &= w^{n}_{1}(a_{r,k}\tau) - u^{n} 
    + \tau \phi_1(a_{r,k}\tau\mathcal{L} _{\kappa})[(\alpha_{r-1}^{n}-1)\mathcal{N}(u^{n})] 
    \\ & \quad + \tau \sum_{j=1}^{r-1} j! \left(a_{r,k}\right)^{j+1} \phi_{j+1}\left(a_{r,k}\tau\mathcal{L}_{\kappa}\right) \left[\alpha_{r-1}^{n}c^{n}_{r-1,j}\right] .
\end{aligned}
\end{equation}
By \cref{eq:noname_5,eq:tempp3,eq:noname_3}, we have
\begin{equation} \label{329}
\begin{aligned}
    &\frac{7}{2}\tau\sqrt{r} \left\|\left|\boldsymbol{c}^{n}_{r}\right|\right\|  \le 
     \frac{7r\kappa\tau}{\sigma_{\min}\left(V_r\right)}
     \left(\left\|w_{1}^{n}(\tau)-u^{n} \right\|+ \frac{3}{2}\tau \sqrt{r-1}
     \left\|\left|\boldsymbol{c}^{n}_{r-1}\right|\right\| \right).  
\end{aligned}
\end{equation}
Substituting the inductive hypothesis \cref{eq:aim_1_rescale} into \cref{329}, we have
\begin{equation} 
    \begin{aligned} \label{ineq327}
        \frac{7}{2}\tau\sqrt{r} \left\|\left|\boldsymbol{c}^{n}_{r}\right|\right\|  
        \le  \frac{10r\kappa\tau}{\sigma_{\min}\left(V_r\right)}\left\|w_{1}^{n}(\tau)-u^{n} \right\|.    
    \end{aligned}
\end{equation}
From \cref{ineq327}, we know that our aim \cref{eq:aim_r_rescaling} holds for any
\[
    \tau \le  \tau_{\max,r+1} \coloneqq \min \left\{\tau_{\max,r}, \frac{\sigma_{\min}\left(V_r\right)}{10r\kappa}  \right\}.
\]
Since $\tau_{\max,1}=+\infty$,
we have 
\begin{equation} 
    \tau_{\max,r} = \frac{1}{10\kappa} \min \left\{{\sigma_{\min}\left(V_1\right)},\frac{\sigma_{\min}\left(V_2\right)}{2}, \ldots ,\frac{\sigma_{\min}\left(V_{r-1}\right)}{r-1}\right\}.
\end{equation}

Finally, by induction, we finish the proof. 
\end{proof}

\subsection{Convergence analysis}\label{section:Convergence analysis}

The MBP-preserving property plays an important role in analyzing the convergence of ETDRK$r$ schemes. It establishes a priori $L^{\infty}$ bounds on the numerical solutions, which notably simplifies the task of convergence analysis. The following theorem demonstrates that, when equipped with the rescaling technique, the ETDRK methods do not compromise the accuracy inherent to the standard ETDRK methods.

\begin{theorem}
   Suppose that \cref{eq:f(beta)<0,eq:kappa_cond2} hold. For a fixed terminal time $T>0$, suppose that the exact solution $u(x,t)$ to the Allen--Cahn equation \cref{eq:AC} belongs to $C^r(C(\bar{\Omega});[0, T])$ and the initial value $u^0(x)$ satisfies $\left\|u^0\right\|_{\infty} \leq \beta$, 
    and let $\left\{u_{r}^n\right\}_{n \geq 0}$ be generated by the ETDRK$r$ scheme with the rescaling technique and uniform nodes $\left\{s_{k}= {k}\tau/{r}\right\}_{k=0}^{r}$. Then we have
\begin{equation}\label{eq:error r}
    \left\|u_{r}^n-u\left(t_n\right)\right\|_{\infty} \leq K_r \mathrm{e}^{\kappa J_r T} \tau^r
\end{equation}
for any $\tau>0$, where the constants $J_r, K_r>0$ are independent of $\tau$.
\end{theorem}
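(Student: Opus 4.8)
The plan is to derive a one–step error representation via Duhamel's principle, reduce the global error to the consistency defect of the rescaled interpolation polynomial, and control that defect through a short induction on the order that exploits the a priori $L^\infty$–bound furnished by \cref{theorem:MBP}. Concretely, set $e^n \coloneqq u_r^n - u(t_n)$, and more generally $M_j^n \coloneqq \max_{s\in[0,\tau]}\|w_j^n(s)-u(t_n+s)\|_\infty$ for $j=1,\dots,r$, where $w_j^n$ denotes the ETDRK$j$ stage solver with rescaling launched from $u^n$ (so that $w_r^n(\tau)=u^{n+1}_r$). Subtracting \cref{eq:Duhamel} for the exact solution on $[t_n,t_n+s]$ from the explicit solution of \cref{eq:ETDRKr rescaling} gives
\[
  w_j^n(s)-u(t_n+s) = \mathrm{e}^{s\mathcal{L}_\kappa} e^n + \int_0^s \mathrm{e}^{(s-\sigma)\mathcal{L}_\kappa}\bigl(\tilde{P}^n_{j-1}(\sigma)-\mathcal{N}[u(t_n+\sigma)]\bigr)\,\mathrm{d}\sigma .
\]
By \cref{lem:contraction semi-group} one has $\|\mathrm{e}^{s\mathcal{L}_\kappa}e^n\|_\infty\le \mathrm{e}^{-\kappa s}\|e^n\|_\infty\le\|e^n\|_\infty$, while the integral is bounded by $\tau\max_{\sigma\in[0,\tau]}\|\tilde{P}^n_{j-1}(\sigma)-\mathcal{N}[u(t_n+\sigma)]\|_\infty$; thus the whole analysis reduces to estimating this consistency defect, after which the choice $j=r$, $s=\tau$ returns $\|e^{n+1}\|_\infty\le M_r^n$.

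To estimate the defect I would first observe that every stage solution stays in $[-\beta,\beta]$: since $\|\tilde{P}^n_{j-1}\|_\infty\le\kappa\beta$ by construction of the scaling factor, the argument in the proof of \cref{theorem:MBP} applies verbatim to each $w_j^n$, not merely to $u^{n+1}$, so $\mathcal{N}$ is globally Lipschitz with constant $2\kappa$ along both the numerical and the exact trajectories. Next, the rescaling defect is controlled by the interpolation defect: exactly as in \cref{eq:tempc1}--\cref{eq:same order}, using that the exact solution obeys the MBP and hence $\|\mathcal{N}[u(t_n+\cdot)]\|_\infty\le\kappa\beta$, one gets $\|(1-\alpha^n_{j-1})P^n_{j-1}\|_\infty\le \|P^n_{j-1}-\mathcal{N}[u(t_n+\cdot)]\|_{L^\infty(\Omega\times[0,\tau])}$. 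Finally, writing $P^n_{j-1}$ as the degree-$(j-1)$ Lagrange interpolant through the numerical values $\mathcal{N}[w^n_{j-1}(a_{j-1,k}\tau)]$ and comparing with the interpolant $Q^n_{j-1}$ through the exact values $\mathcal{N}[u(t_n+a_{j-1,k}\tau)]$, the classical interpolation error bound gives $\|Q^n_{j-1}-\mathcal{N}[u(t_n+\cdot)]\|_\infty\le C\tau^{j}$ (using $u\in C^r$ and smoothness of $f$), whereas the Lagrange–basis estimate together with the Lipschitz bound yields $\|P^n_{j-1}-Q^n_{j-1}\|_\infty\le 2\kappa\Lambda_{j-1}M^n_{j-1}$ with $\Lambda_{j-1}$ a fixed Lebesgue-type constant. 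Collecting these bounds in the Duhamel identity for $w_j^n$ gives the recursion
\[
  M_j^n \le \|e^n\|_\infty + C\tau^{j+1} + C'\tau\, M^n_{j-1},
\]
with base case $M_1^n\le(1+2\kappa\tau)\|e^n\|_\infty+C\tau^2$ (for $j=1$ no rescaling occurs and one uses $u\in C^1$).

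Iterating this recursion over the fixed finite number of levels $j=1,\dots,r$ yields $M_r^n\le(1+J\tau)\|e^n\|_\infty+K\tau^{r+1}$ with $J,K$ independent of $n$, $\tau$ and $\varepsilon$; hence $\|e^{n+1}\|_\infty\le(1+J\tau)\|e^n\|_\infty+K\tau^{r+1}$, and since $e^0=0$ the discrete Gronwall inequality gives $\|e^n\|_\infty\le \tfrac{K}{J}\bigl((1+J\tau)^n-1\bigr)\tau^r\le K_r\mathrm{e}^{\kappa J_r T}\tau^r$ after renaming constants, which is \cref{eq:error r}. The step I expect to be the main obstacle is handling the nested definition of the schemes: $P^n_{r-1}$ interpolates the \emph{numerical} stage values $\mathcal{N}[w^n_{r-1}(\cdot)]$, so its accuracy depends on that of ETDRK$(r-1)$ launched from $u^n$, which depends on ETDRK$(r-2)$, and so on; this forces the induction on $j$ and, more delicately, the bookkeeping that each level contributes only a factor $1+O(\tau)$ — rather than a constant strictly larger than $1$ — to the coefficient of $\|e^n\|_\infty$, so that Gronwall does not blow up. The a priori bound from \cref{theorem:MBP}, which makes $\mathcal{N}$ uniformly Lipschitz along all (numerical and exact) trajectories with constant $2\kappa$, is exactly what removes the need for a separate boundedness bootstrap and keeps all constants $\varepsilon$-independent.
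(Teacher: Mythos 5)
Your proposal is correct and follows essentially the same route as the paper's proof: Duhamel plus the contraction semigroup of \cref{lem:contraction semi-group}, the MBP-induced Lipschitz bound $2\kappa$ on $\mathcal{N}$, the splitting of the defect into rescaling error, exact-interpolation error, and numerical-versus-exact stage values (controlled via a Lagrange-basis/Lebesgue constant), an induction over the nested stage solvers, and a discrete Gronwall step. Your induction on the stage index $j$ with $M_j^n$ is just a repackaging of the paper's induction on the scheme order via the one-step recursion \cref{eq:truncation error} evaluated at $s_k$, so no substantive difference.
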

\begin{proof}
The high-order ETDRK schemes are constructed through an iterative way. Consider the ETDRK$(r+1)$ scheme equipped with the rescaling technique, with $P_r^n(s)$ the interpolation polynomial of $\mathcal{N}(w_{r}^{n}(s))$, as specified in \cref{eq:vandermonde V_r}. Here, $w_{r}^n(s)$ is the solution obtained from the ETDRK$r$ scheme equipped with the rescaling technique, which preserves the MBP. 
Therefore, we have the following error estimation similar with \cref{eq:same order}, i.e.
\begin{equation}\label{ineqt1}
    \left\|{P}_{r}^{n}(s) - \tilde{P}_{r}^{n}(s)\right\|_{\infty} = O \left(\tau^{r+1}\right) , \quad \forall  s \in [0,\tau].
\end{equation}

Let $e_{r}^{n}:=u^{n}_{r}-u\left(t_{n}\right)$ denotes the error of the $r$th-order ETDRK scheme. We first prove the case of ETDRK1 scheme. In fact, from the previous work \cite{DU-JU-LI-QIAO2021ETD}, it is already known that
\begin{equation}
\left\Vert e_{1}^{n+1}\right\Vert _{\infty } \leq   (1+\kappa \tau )\left\Vert e_{1}^{n}\right\Vert _{\infty } + C_1 \tau ^{2},
\end{equation}
where, $C_1>0$ is a constant independent of the time step $\tau$. By extending this through recursion, we establish:
\begin{equation}
\begin{aligned}
    \left\|e_1^n\right\|_{\infty} 
    &\le (1+\kappa \tau)^n\left\|e_1^0\right\|+C_1 \kappa \tau^2 \sum_{k=0}^{n-1}(1+\kappa \tau)^k \\
    &= C_1 \tau\left[(1+\kappa \tau)^n-1\right] \le C_1 \mathrm{e}^{\kappa n \tau} \tau.
\end{aligned}
\end{equation}
By letting $K_1 = C_1$, we obtain \cref{eq:error r} for $r=1$ since $T = n\tau $. 

Suppose that there exist positive constants $M_r, C_{r,1}, C_{r,2}, \cdots, C_{r,r}$ independent of $\tau$, such that
\begin{equation}\label{eq:truncation error}
    \left\Vert e_{r}^{n+1}\right\Vert _{\infty } \le \left( 1+ C_{r,1}\kappa\tau +\cdots + C_{r,r} \left(\kappa\tau\right)^r \right)\left\Vert e_{r}^{n}\right\Vert _{\infty } +M_{r} \tau ^{r+1}.
\end{equation}
Then, based on the inductive hypothesis, we will prove that 
there exist positive constants $M_{r+1}, C_{r+1,1}, C_{r+1,2}, \cdots, C_{r+1,r+1}$ independent of $\tau$, such that
\begin{equation}\label{eq:truncation error r+1}
    \left\Vert e_{r+1}^{n+1}\right\Vert _{\infty } \le  \left( 1+ C_{r+1,1}\kappa\tau +\cdots + C_{r+1,r+1} \left(\kappa\tau\right)^{r+1} \right)\left\Vert e_{r+1}^{n}\right\Vert _{\infty } +M_{r+1} \tau ^{r+2}.
\end{equation}

By Lemma \ref{lem:contraction semi-group}, we have
\begin{align*}
    \left\|e_{r+1}^{n+1}\right\| _{\infty} &=  \left\|\mathrm{e}^{\tau \mathcal{L}_{\kappa}} e_{r+1}^n+\int_0^\tau  \mathrm{e}^{(\tau-s) \mathcal{L}_{\kappa}}\left(\tilde{P}^{n}_r(s)-\mathcal{N}\left[u\left(t_n+s\right)\right]\right) \mathrm{~d} s \right\| _{\infty} 
    \\ 
    & \leq   \mathrm{e}^{-\kappa \tau }\left\Vert e_{r+1}^{n}\right\Vert _{\infty} \\
    & \quad +\int _{0}^{\tau }\mathrm{e}^{-\kappa (\tau -s)}\left(\left\Vert {P}^{n}_r(s) -\prod _{r}\mathcal{N} [u(t_{n} +s)]\right\Vert _{\infty}+R_{r+1}\right)\mathrm{~d} s,
\end{align*}
where $\prod \limits _{r}$ represents the Lagrange interpolation operator corresponding to the uniform nodes $\left\{s_{k} \coloneqq {k}\tau/{r}\right\}_{k=0}^{r}$, and $R_{r+1}$ is the truncation error given by
\begin{equation}
    R_{r+1}(s) = \left\|\mathcal{N} [u(t_{n} +s)]-\prod _{r}\mathcal{N} [u(t_{n} +s)]\right\|_{\infty}+ \left\| \tilde{P}_{r}^{n}(s)-{P}_{r}^{n}(s)\right\|_{\infty}.
\end{equation}
By the error estimates of interpolation, we have
\begin{equation}\label{ineqt2}
        \left\|\mathcal{N} [u(t_{n} +s)]-\prod _{r}\mathcal{N} [u(t_{n} +s)]\right\|_{\infty} = O(\tau^{r+1}).
\end{equation}
By \cref{ineqt1,ineqt2}, we know that
there exists a constant $\tilde{M}_{r}>0$ independent of $\tau$, such that
\begin{equation}\label{eq:R_bound}
     R_{r+1}(s) \le \tilde{M}_{r} \tau^{r+1}, \quad \forall s \in [0,\tau].
\end{equation}
If we denote $\ell_{r,k}(s)$ as the Lagrange basis functions associated with the uniform interpolation nodes $\left\{s_{k}={{k}\tau}/{r}\right\}_{k=0}^{r}$, 
then the interpolation polynomial $P_r^n(s)$ can be expressed as
\[
P_r^n(s) = \sum _{k=0}^{r}\ell_{r,k} (s) \mathcal{N}\left( w_{r}^{n}\left(s_k\right)\right). 
\]
For the Lagrange basis functions $\ell_{r,k}(s)$, it is easy to check that
\begin{equation}\label{eq:lagrange_basis_bound}
    \left|\ell_{r,k}(s)\right|= \left|\prod_{\substack{i=0 \\ i\neq k}}^{r} \frac{s-s_i}{s_k-s_i}\right| \le \left(\frac{\tau}{\tau/r}\right)^{r}=r^{r}.
\end{equation}
By the way, the inequality \cref{eq:lagrange_basis_bound} is not optimal, and one can find smaller upper bound for a fixed $r$.
From \cref{eq:truncation error}, we know (replacing $\tau$ with $s_k$ and $e_r^n$ with $e_{r+1}^n$) that
\begin{equation}\label{eq:truncationr}
    \left\Vert w_{r}^{n}\left(s_k\right) - u(t_{n} +s_{k} )\right\Vert _{\infty } \le \left( 1+  \sum_{i=1}^{r} C_{r,i} \left(\kappa s_k\right)^i \right)\left\Vert e_{r+1}^{n}\right\Vert _{\infty } +M_{r}  s_k ^{r+1}.
\end{equation}
By \cref{eq:R_bound,eq:lagrange_basis_bound}, and using the fact that $1-\mathrm{e}^{-a} \leq a$ for any $a>0$, we have
\begin{align}\label{eq:results}
   \notag \left\|e_{r+1}^{n+1}\right\| _{\infty}  & \le \mathrm{e}^{-\kappa \tau }\left\Vert e_{r+1}^{n}\right\Vert _{\infty} +  \int _{0}^{\tau }\mathrm{e}^{-\kappa (\tau -s)} \tilde{M}_{r} \tau^{r+1} \mathrm{~d} s \\
   \notag &\quad + \int _{0}^{\tau }\mathrm{e}^{-\kappa (\tau -s)} \sum _{k=0}^{r}| \ell_{r,k} (s)| \left\Vert \left(\mathcal{N}\left(w_{r}^{n}\left(s_k\right)\right) -\mathcal{N} \left(u(t_{n} +s_{k} )\right)\right)\right\Vert_{\infty} \mathrm{~d} s \\
    \notag & \le \mathrm{e}^{-\kappa \tau } \left\Vert e_{r+1}^{n}\right\Vert _{\infty}+ +  \int _{0}^{\tau }\mathrm{e}^{-\kappa (\tau -s)} \tilde{M}_{r} \tau^{r+1} \mathrm{~d} s \\
    \notag & \quad + \int _{0}^{\tau }\mathrm{e}^{-\kappa (\tau -s)} r^{r}\sum _{k=0}^{r}2 \kappa \left\Vert w_{r}^{n}\left(s_k\right) -u(t_{n} +s_{k} )\right\Vert_{\infty} \mathrm{~d} s\\
    & \le  \mathrm{e}^{-\kappa \tau }\left\Vert e_{r+1}^{n}\right\Vert _{\infty}+ \frac{1-\mathrm{e}^{-\kappa \tau }}{\kappa} \left( \tilde{M}_{r} \tau^{r+1} + 2 \kappa r^{r} M_{r} \sum _{k=0}^{r}  \left(\frac{k\tau}{r}\right)^{r+1}\right)    \\
   \notag & \quad + \frac{1-\mathrm{e}^{-\kappa \tau }}{\kappa} \left[2 \kappa r^{r}\sum _{k=0}^{r}\left( 1+ \sum_{i=1}^{r} C_{r,i} \left(\kappa\frac{ k\tau}{r}\right)^i \right)\left\Vert e_{r+1}^{n}\right\Vert_{\infty } \right] \\
    \notag& \le \mathrm{e}^{-\kappa \tau } \left\Vert e_{r+1}^{n}\right\Vert _{\infty}+  \tau \left( 2 \kappa  r^{r+1} M_{r} \tau^{r+1} \int_{1/r}^{1+1/r} x^{r+1}\mathrm{~d} x+ \tilde{M}_{r} \tau^{r+1} \right)  \\
    \notag& \quad + 
    \left[ 2 r^{r}(r+1)\kappa\tau  + 2 r^{r}\sum_{i=1}^{r} C_{r,i} \left(\kappa\tau\right)^{i+1} \int_{1/r}^{1+1/r} x^{i}\mathrm{~d} x   \right]\left\Vert e_{r+1}^{n}\right\Vert_{\infty } \\
   \notag & = \left(1+C_{r+1,1}\kappa\tau +\cdots + C_{r+1,r+1} \left(\kappa\tau\right)^{r+1} \right)\left\Vert e_{r+1}^{n}\right\Vert_{\infty } + M_{r+1} \tau^{r+2},
\end{align}
The constants are defined as follows
\begin{align*}
M_{r+1} &\coloneqq \frac{2\kappa r^{r+1} M_r}{r+2} \left[\left(1+\frac{1}{r}\right)^{r+2}-\left(\frac{1}{r}\right)^{r+2}\right] + \tilde{M}_r,\\
C_{r+1,1} &\coloneqq 2r^r(r+1),\\
C_{r+1,k} &\coloneqq \frac{2r^r C_{r,k-1}}{k}  \left[\left(1+\frac{1}{r}\right)^{k}-\left(\frac{1}{r}\right)^{k}\right], \quad k = 2,3,\cdots,r.
\end{align*}
By  mathematical induction, we proved that \cref{eq:truncation error} holds for any $r \ge 1$. 

Finally, by recursion for \cref{eq:truncation error}, we obtain
\begin{align*}
    \left\|e_r^n\right\|  & \leq\left[1+ \sum_{i=1}^{r} C_{r,i} \left(\kappa\tau\right)^i\right]^n\left\|e_r^0\right\|
    + M_r \tau^{r+1} \sum_{k=0}^{n-1}\left[ 1+ \sum_{i=1}^{r} C_{r,i} \left(\kappa\tau\right)^i \right]^k \\
    & =  M_r \tau^{r+1} \frac{ \left[ 1+ \sum_{i=1}^{r} C_{r,i} \left(\kappa\tau\right)^i \right]^n  -1}{\sum_{i=1}^{r} C_{r,i} \left(\kappa\tau\right)^i}  
     \le  K_r \tau^{r}  \mathrm{e}^{n J_r \kappa\tau} ,
\end{align*}
where $K_r \coloneqq M_r/\left(\kappa C_{r,1}\right)$,
$\displaystyle J_r \coloneqq \max_{1\le k \le r} k! C_{r,k} 
$ and $T = n\tau$.
\end{proof}

\section{Numerical experiments}\label{section:numerical experiments}
Consider the Allen--Cahn equation \cref{eq:AC} in the domain $ \Omega = (0,2\pi) \times (0,2\pi)$ with homogeneous Neumann boundary conditions, and use uniform interpolation nodes to generate ETDRK schemes.
We implement a uniform rectangular mesh with a mesh size of $h$ to partition the domain, and we use the central finite difference method to discrete the Laplace operator. 
Since the discrete Laplace operator $\Delta_h$ generated by the central differencing method also forms a generator of a contraction semigroup, the proof of the fully discrete MBP still holds by replacing the $\Delta$ with $ \Delta_{h}$ in Lemma \ref{lem:contraction semi-group}. 

We choose two commonly used forms of potential function $F$. The first is the Ginzburg--Landau potential function, defined as
\begin{equation}\label{eq:F_GL poly}
    F_{\text{GL}}(u)=\frac{1}{4}\left(1-u^2\right)^2.
\end{equation}
The corresponding $f$ is $f_{\text{GL}}(u) = u-u^3$.
The maximum bound is $\beta=1$, and
the stabilizing constant is $\kappa\ge \max _{|\xi| \le \beta}\left|f_{GL}^{\prime}(\xi)\right| = 2$.
The second is the Flory-Huggins potential, given by
\begin{equation}\label{eq:F_FH log}
    F_{\text{FH}}(u)=\frac{\theta}{2}[(1+u) \ln (1+u)+(1-u) \ln (1-u)]-\frac{\theta_c}{2} u^2,
\end{equation}
where $\theta$ and $\theta_c$ are constants satisfying $0<\theta<\theta_c$, and the corresponding $f$ is
\begin{equation}\label{eq:f_FH log}
    f_{\text{FH}}(u) =\frac{\theta}{2} \ln \frac{1-u}{1+u}+\theta_c u.
\end{equation}
 $\beta$ is the positive root of $f_{\text{FH}}(u) = 0$.
In the following numerical experiments, we set $\theta=0.8$ and $\theta_c=1.6$. Then $\beta \approx 0.9575$, and $\kappa\ge \max _{|\xi| \le \beta}\left|f_{FH}^{\prime}(\xi)\right| \approx  8.02$.


\subsection{Convergence in time}
Consider the Allen--Cahn equation \cref{eq:AC} with $\varepsilon = 0.1$ and the cubic function $f_{\text{GL}}=u-u^{3}$ from the Ginzburg--Landau potential.
To verify the temporal convergence rates of the ETDRK$r$ with the rescaling technique schemes, let us consider the smooth initial value $u^{0}(x,y) = 0.5 \sin x \sin y$.
We set a uniform spatial mesh size $h=2 \pi / 512$ and the terminal time $T = 2$.
With these settings, we calculate the numerical solutions
with various time step sizes $\tau =0.1 \times  2^{-k}, k =0, 1, \ldots , 6$ and calculate the relative errors to get the convergence rate.
The $L^{\infty}$ and $L^{2}$ norms are considered to calculate the convergence rates.
It can be observed in Table \ref{table:combined1} that the convergence rate approaches theoretical values.

\begin{table}[htbp]
  \centering
  \footnotesize
  \caption{Rates of convergence for third-order to fifth-order ETDRK schemes with the rescaling technique.}\label{table:combined1}  
  \begin{center}
  \begin{tabular}{|ccccc|}
    \hline
    \multicolumn{5}{|c|}{ETDRK3 with the rescaling technique}                                                                            \\ \hline
    \multicolumn{1}{|c|}{$\tau = 0.1$} & \multicolumn{1}{c|}{$L^{\infty}$ error} & \multicolumn{1}{c|}{Rate} & \multicolumn{1}{c|}{$L^{2}$ error} &  Rate \\ \hline
    \multicolumn{1}{|c|}{$\tau$} & \multicolumn{1}{c|}{5.343e+00} & \multicolumn{1}{c|}{-} & \multicolumn{1}{c|}{1.843e-02} & - \\ 
    \multicolumn{1}{|c|}{$\tau /2$} & \multicolumn{1}{c|}{9.554e-01} & \multicolumn{1}{c|}{2.484} & \multicolumn{1}{c|}{3.316e-03} & 2.475 \\ 
    \multicolumn{1}{|c|}{$\tau /4$} & \multicolumn{1}{c|}{1.439e-01} & \multicolumn{1}{c|}{2.731} & \multicolumn{1}{c|}{4.999e-04} & 2.729 \\ 
    \multicolumn{1}{|c|}{$\tau /8$} & \multicolumn{1}{c|}{1.978e-02} & \multicolumn{1}{c|}{2.863} & \multicolumn{1}{c|}{6.873e-05} & 2.863 \\ 
    \multicolumn{1}{|c|}{$\tau /16$} & \multicolumn{1}{c|}{2.593e-03} & \multicolumn{1}{c|}{2.931} & \multicolumn{1}{c|}{9.012e-06} & 2.931 \\ 
    \multicolumn{1}{|c|}{$\tau /32$} & \multicolumn{1}{c|}{3.320e-04} & \multicolumn{1}{c|}{2.965} & \multicolumn{1}{c|}{1.154e-06} & 2.965 \\ \hline 
    \hline
    \multicolumn{5}{|c|}{ETDRK4 with the rescaling technique}                                                                            \\ \hline
    \multicolumn{1}{|c|}{$\tau = 0.1$} & \multicolumn{1}{c|}{$L^{\infty}$ error} & \multicolumn{1}{c|}{Rate} & \multicolumn{1}{c|}{$L^{2}$ error} &  Rate \\ \hline
    \multicolumn{1}{|c|}{$\tau$} & \multicolumn{1}{c|}{1.161e+00} & \multicolumn{1}{c|}{-} & \multicolumn{1}{c|}{4.035e-03} & - \\ 
    \multicolumn{1}{|c|}{$\tau /2$} & \multicolumn{1}{c|}{1.070e-01} & \multicolumn{1}{c|}{3.440} & \multicolumn{1}{c|}{3.722e-04} & 3.438 \\ 
    \multicolumn{1}{|c|}{$\tau /4$} & \multicolumn{1}{c|}{8.154e-03} & \multicolumn{1}{c|}{3.714} & \multicolumn{1}{c|}{2.837e-05} & 3.714 \\ 
    \multicolumn{1}{|c|}{$\tau /8$} & \multicolumn{1}{c|}{5.631e-04} & \multicolumn{1}{c|}{3.856} & \multicolumn{1}{c|}{1.959e-06} & 3.856 \\ 
    \multicolumn{1}{|c|}{$\tau /16$} & \multicolumn{1}{c|}{3.701e-05} & \multicolumn{1}{c|}{3.928} & \multicolumn{1}{c|}{1.288e-07} & 3.928 \\ 
    \multicolumn{1}{|c|}{$\tau /32$} & \multicolumn{1}{c|}{2.372e-06} & \multicolumn{1}{c|}{3.964} & \multicolumn{1}{c|}{8.252e-09} & 3.964 \\ \hline 
    \hline
    \multicolumn{5}{|c|}{ETDRK5 with the rescaling technique}                                                                            \\ \hline
    \multicolumn{1}{|c|}{$\tau = 0.1$} & \multicolumn{1}{c|}{$L^{\infty}$ error} & \multicolumn{1}{c|}{Rate} & \multicolumn{1}{c|}{$L^{2}$ error} &  Rate \\ \hline
    \multicolumn{1}{|c|}{$\tau$} & \multicolumn{1}{c|}{2.011e-01} & \multicolumn{1}{c|}{-} & \multicolumn{1}{c|}{7.002e-04} &  -\\ 
    \multicolumn{1}{|c|}{$\tau /2$} & \multicolumn{1}{c|}{9.390e-03} & \multicolumn{1}{c|}{4.421} & \multicolumn{1}{c|}{3.270e-05} & 4.420 \\ 
    \multicolumn{1}{|c|}{$\tau /4$} & \multicolumn{1}{c|}{3.593e-04} & \multicolumn{1}{c|}{4.708} & \multicolumn{1}{c|}{1.251e-06} & 4.708 \\ 
    \multicolumn{1}{|c|}{$\tau /8$} & \multicolumn{1}{c|}{1.242e-05} & \multicolumn{1}{c|}{4.854} & \multicolumn{1}{c|}{4.327e-08} & 4.854 \\ 
    \multicolumn{1}{|c|}{$\tau /16$} & \multicolumn{1}{c|}{4.085e-07} & \multicolumn{1}{c|}{4.927} & \multicolumn{1}{c|}{1.423e-09} & 4.927 \\ 
    \multicolumn{1}{|c|}{$\tau /32$} & \multicolumn{1}{c|}{1.310e-08} & \multicolumn{1}{c|}{4.963} & \multicolumn{1}{c|}{4.574e-11} & 4.959 \\ \hline 
    \end{tabular}
  \end{center}
\end{table}


\subsection{Unconditional preservation of the MBP}
Consider the Allen--Cahn equation \cref{eq:AC} with $\varepsilon = 0.1$ and the logarithmic function $f_{\text{FH}}$ from the Flory--Huggins potential. The preservation of MBP is important in this case since the equation consists of the logarithmic terms which will involve complex numbers if the value of the solution is out of the interval $(-1, 1)$. 
We set a uniform time step $\tau=1$,  a uniform spatial mesh size $h=2 \pi / 512$ and a random data ranging from  $-\beta$ to $\beta$ generated on the mesh as the initial value $u^{0}$, which is highly oscillated.
For $r=3,5,7$, we compare the maximum norm of solutions generated by the standard ETDRK$r$ schemes and the ETDRK$r$ schemes with the rescaling technique in Figure \ref{figure:MBP}.
From Figure \ref{figure:MBP}, it can be seen that the maximum norm of numerical solutions of standard ETDRK3, ETDRK5, and ETDRK7 exceed the maximum bound $\beta$, and after using the rescaling technique, the numerical solutions preserve the MBP, with errors of the same magnitude as standard ones.

\begin{figure}[!ht]
  \centerline{
  \hspace{-0.3cm}
  \includegraphics[width=0.34\textwidth]{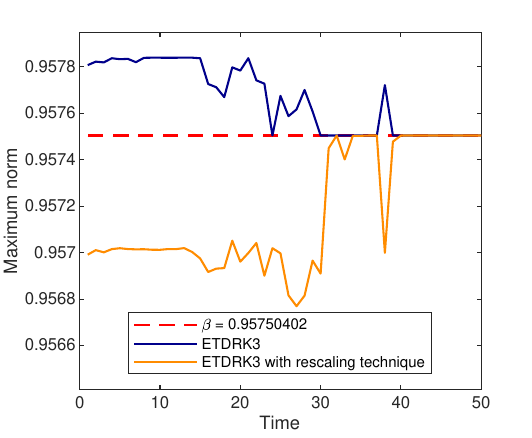}\hspace{-0.2cm}
  \includegraphics[width=0.34\textwidth]{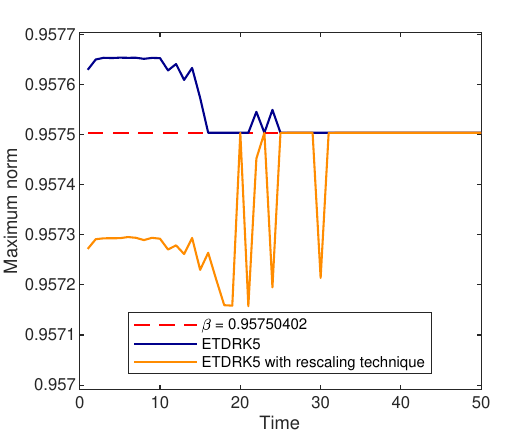}\hspace{-0.2cm}
  \includegraphics[width=0.34\textwidth]{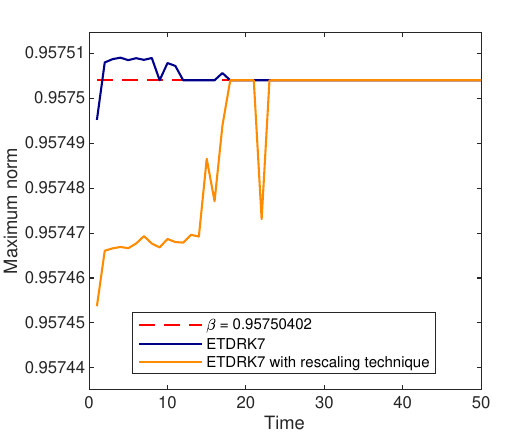}}
  \vspace{-0.2cm}
  \caption{Evolutions of the maximum norms of the numerical solutions for $r=3,5$ and $7$, respectively (left to right).}
  \label{figure:MBP}
\end{figure}

\subsection{Original energy dissipation law}
Consider the Allen--Cahn equation \cref{eq:AC} with $\varepsilon = 0.1$ and the logarithmic function $f_{\text{FH}}$. 
We set a uniform spatial mesh size $h=2 \pi / 512$ and the smooth initial value $u^{0}(x,y) = 0.5 \sin x \sin y$.
For $r=3,4,5,6$,
we compute the original energy of numerical solutions generated by the ETDRK$r$ schemes equipped with the rescaling technique in Figure \ref{figure:MBP} with $\tau = 0.2$, $0.1$, and $0.01$.
\begin{figure}[!ht]
  \centerline{
  \hspace{-0.3cm}
  \includegraphics[width=0.34\textwidth]{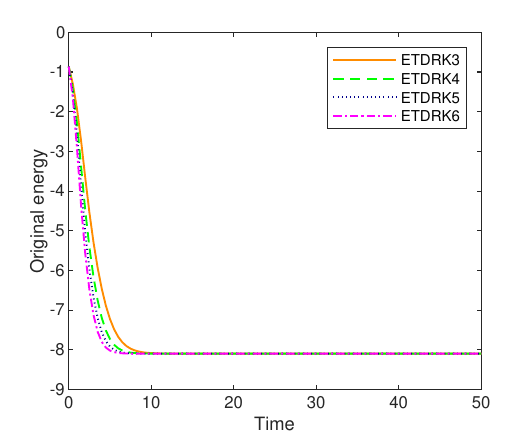}\hspace{-0.2cm}
  \includegraphics[width=0.34\textwidth]{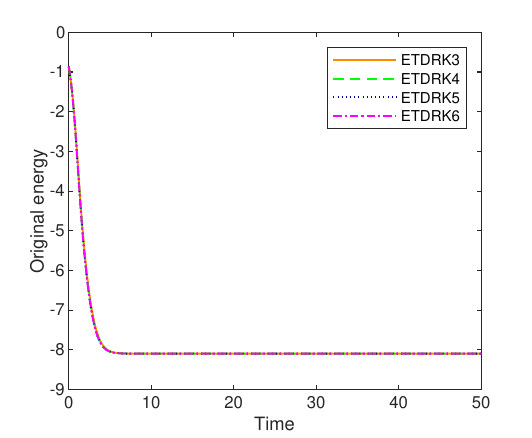}\hspace{-0.2cm}
  \includegraphics[width=0.34\textwidth]{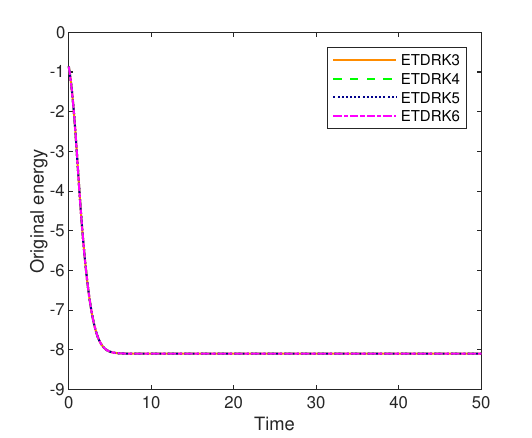}}
  \vspace{-0.2cm}
  \caption{Comparison of original energy with different schemes but same time steps for $\tau=0.2$, $0.1$ and $0.01$, respectively (left to right).}
  \label{fig:energy_same_timestep}
\end{figure}
According to Theorem \ref{theorem:energy decay rescaling}, the time-step size constraints are $\tau_{\max,3} = 1.031 \times 10^{-3}$, $\tau_{\max,4} = 1.141  \times10^{-4}$, $\tau_{\max,5} = 1.374\times10^{-5}$ and $\tau_{\max,6} = 1.697\times10^{-6}$.
However, from Figure \ref{fig:energy_same_timestep}, we can see that the ETDRK schemes do not require a very strict time-step size to maintain the original energy dissipation law. For ETDRK6, our theoretical time-step size restriction is about $10^{-6}$, but Figure \ref{fig:energy_same_timestep} shows that the original energy still decreases with $\tau=0.2$ for this example.

\section{Conclusions}\label{section:conclusions}

    We analyze the MBP and original energy dissipation law of arbitrarily high-order ETDRK schemes for Allen--Cahn equations. 
    We propose some time-step size restrictions to preserve the original energy dissipation law when the nonlinear term is Lipschitz continuous. 
    In addition, we have proposed a rescaling technique to preserve the MBP unconditionally without influencing accuracy of the numerical solution, which can guarantee the Lipschitz condition on the nonlinear term.
    Moreover, our analysis is suitable for arbitrarily high-order ETDRK schemes.
    We also provide some numerical examples to verify theoretical results and show that the ETDRK schemes with rescaling technique have better properties than the standard one.
    In future studies, we will expect to preserve the original energy dissipation law without any time step restriction and extend our energy analysis and the rescaling technique to more phase-field models.





\bibliographystyle{siamplain}
\bibliography{references}
\end{document}